\def\CC {{\mathbb C}}     
\def\RR {{\mathbb R}}     
\def\ZZ {{\mathbb Z}}     
\def\ring#1{\ifmmode \mathaccent'027 #1\else \rm\accent'027 #1\fi}
\def\ol  {\overline}
\def \bd {\begin{diagram}}
\def \ed {\end{diagram}}
\def\be  {\begin{eqnarray}}
\def\ee  {\end{eqnarray}}
\def\ben {\begin{eqnarray*}}
\def\een {\end{eqnarray*}}
\def\bpr {\begin{proof}[Proof]}
\def\epr {\end{proof}}
\def\bsp {\begin{split}}
\def\esp {\end{split}}
\def\bcd {\begin{CD}}
\def\ecd {\end{CD}}
\mathchardef\mhyphen="2D
\newtheorem{theorem}{Theorem}[section]
\newtheorem{thmdef}{Theorem/Definition}[section]
\newtheorem{prop}[theorem]{Proposition}
\newtheorem{coro}[theorem]{Corollary}
\newtheorem{remark}[theorem]{Remark}
\theoremstyle{definition}
\newtheorem{df}[theorem]{Definition}
\theoremstyle{plain}
\newtheorem{conj}{Conjecture}[section]
\theoremstyle{plain} \newtheorem{thm}[theorem]{Theorem}
\theoremstyle{plain} \newtheorem{lem}[theorem]{Lemma}
\numberwithin{equation}{section}
\newcommand{\Addresses}{{
  \bigskip
  \footnotesize

	(F.~Haiden) \textsc{Harvard University Department of Mathematics, Science Center, One Oxford Street, Cambridge, MA 02138, USA}\par\nopagebreak
	\textit{E-mail:} \texttt{haiden@math.harvard.edu}
	\medskip
	
	(L.~Katzarkov) \textsc{Fakult\"at f\"ur Mathematik, Universit\"at Wien, 
	Oskar-Morgenstern-Platz 1, 1090 Wien, Austria, and University of Miami, USA, and HSE Moscow, Russia}\par\nopagebreak
	\textit{E-mail:} \texttt{lkatzarkov@gmail.com}
	\medskip
	
	(M.~Kontsevich) \textsc{Institut des Hautes \'Etudes Scientifiques,
	35 route de Chartres, 91440 Bures-sur-Yvette, France}\par\nopagebreak
	\textit{E-mail:} \texttt{maxim@ihes.fr}
	\medskip
	
	(P.~Pandit) \textsc{Fakult\"at f\"ur Mathematik, Universit\"at Wien, 
	Oskar-Morgenstern-Platz 1, 1090 Wien, Austria}\par\nopagebreak
	\textit{E-mail:} \texttt{pranav.pandit@univie.ac.at}
	\medskip
}}
\begin{document}

\title{Iterated logarithms and gradient flows}
\author{F.~Haiden, L.~Katzarkov, M.~Kontsevich, P.~Pandit}
\maketitle

\begin{abstract}
We consider applications of the theory of balanced weight filtrations and iterated logarithms, initiated in previous work of the authors, to PDEs. 
The main result is a complete description of the asymptotics of the Yang--Mills flow on the space of metrics on a holomorphic bundle over a Riemann surface.
A key ingredient in the argument is a monotonicity property of the flow which holds in arbitrary dimension.
The A-side analog is a modified curve shortening flow for which we provide a heuristic calculation in support of a detailed conjectural picture.

\end{abstract}

\tableofcontents

\newpage

\section{Introduction}

The theory of center manifolds provides a powerful method of analysis of local
bifurcations in infinite-dimensional systems associated with a PDE. They allow to reduce, under certain conditions, the
infinite-dimensional dynamics near a bifurcation point to a finite-dimensional dynamics, described by a system of ordinary differential equations on the center manifold, which is the submanifold of the space of solutions  along which the PDE flow has sub-exponential growth.

A novel categorical approach to the study of center manifolds was initiated by the authors in \cite{hkkp_semistability}.
This lead to the discovery of
\begin{enumerate}[1)]
\item
The presence of \textit{iterated logarithms} in the asymptotics of solutions
\item
A theory of \textit{weight filtrations} in modular lattices
\end{enumerate}
While our previous work \cite{hkkp_semistability} provided a detailed study of the minimizing flow on the space of metrics of a quiver representation, the purpose of this work is to explore further generalizations and applications to classical PDEs, as well as suggest conjectures and directions for future research.
All this is part of more general program - Categorical K\"ahler Geometry - to which we will return with more details elsewhere.



\subsubsection*{Hermitian Yang--Mills flow}

A detailed study of the Yang--Mills functional for bundles over a compact Riemann surface $X$ with K\"ahler form $\omega$ was initiated by Atiyah--Bott~\cite{atiyah_bott}.
The limiting behavior of the gradient flow of the Yang--Mills functional on the space of connections was determined by Daskalopoulos~\cite{daskalopoulos92} and R\aa de~\cite{rade92} and is given by the associated graded of the Harder--Narasimhan--Seshadri filtration.
On the other hand, since any metric on a holomorphic bundle $E$ over $X$ has an associated metric connection compatible with the complex structure, the Yang--Mills flow can be written as
\begin{equation}
h^{-1}\partial_t h=-2i(\Lambda F-\lambda)
\end{equation}
on the space of Hermitian metrics $h$ on $E$, where $F$ is the curvature of the connected associated with $h$.
In general, $h$ will grow or decay at different rates on various subbundles in a way determined by a refinement of the Harder--Narasimhan filtration provided by the theory of weight filtrations in modular lattices (see \cite{hkkp_semistability} and Section~\ref{sec_weightfilt}).
More precisely, we show the following.

\begin{thm}
\label{thm_hym_asymp}
Let $X$ be a compact Riemann surface with K\"ahler form $\omega$ and $E$ be a holomorphic bundle on $X$.
Then there exists a canonical filtration 
\begin{equation}
0=E_0\subset E_1\subset\ldots\subset E_n=E
\end{equation}
labelled by $\beta_1<\ldots<\beta_n$ with 
\begin{equation}
\beta_k\in \RR t\oplus\RR\log t\oplus\RR\log\log t\oplus\ldots
\end{equation}
such that
\begin{equation}
\left\|\log\left(h\mid_{E_k}(x)\right)\right\|=\beta_k+O(1)
\end{equation}
where we choose some reference metric on $E$ so that $h$ becomes a positive self-adjoint section of $\mathrm{End}(E)$ and the bounded term $O(1)$ is uniform in $x\in X$.
Moreover, $E_k/E_{k-1}$ is a direct sum of stable bundles of some slope $\mu_k\in\RR$ and
\begin{equation}
\beta_k=4\pi\left(\int_X\omega\right)^{-1}\left(\mu_k-\mu(E)\right)t+\ldots.
\end{equation}
\end{thm}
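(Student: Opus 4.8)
The plan is to reduce the infinite-dimensional flow to the finite-dimensional minimizing gradient flow analyzed in \cite{hkkp_semistability}, whose large-time asymptotics are already known to involve iterated logarithms and a balanced weight filtration in a modular lattice, and to control this reduction using the monotonicity property established below (which holds in arbitrary dimension). First I would invoke the known parabolic theory for the Yang--Mills flow over a Riemann surface (Atiyah--Bott \cite{atiyah_bott}, Daskalopoulos \cite{daskalopoulos92}, R\aa de \cite{rade92}): the flow exists for all time, and $h(t)$, suitably rescaled to remove the exponential factors dictated by the Harder--Narasimhan slopes, converges to a limiting metric on the associated graded $\mathrm{gr}(E)$ of the Harder--Narasimhan--Seshadri filtration. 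Peeling off the Harder--Narasimhan filtration first accounts for the $t$-linear part of the exponents: substituting the leading-order value of $\Lambda F$ on the $k$-th graded piece into the flow equation produces the stated leading term $4\pi(\int_X\omega)^{-1}(\mu_k-\mu(E))\,t$ of $\beta_k$. This reduces the genuinely new content --- the subleading, iterated-logarithm terms --- to the case in which $E$ is semistable with all its Seshadri graded pieces stable of one fixed slope $\mu$.

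In that equal-slope case one shows that the transverse component of the flow decays exponentially in $t$ away from a finite-dimensional space of ``slow'' directions. By Hodge theory on $X$, this slow space is modeled on the space of Hermitian metrics on a fixed representation $M$ of the quiver attached to $\mathrm{gr}(E)$: one vertex for each stable summand of $\mathrm{gr}(E)$, arrows a basis of the relevant $\Ext^1$-groups, and the structure maps of $M$ recording the extension class of $E$ itself. The monotonicity property is then the tool that shows the orthogonal projection of $h(t)$ onto this slice evolves, up to exponentially small error, by exactly the minimizing gradient flow of \cite{hkkp_semistability} for $M$, while the transverse part decays at the definite exponential rate set by the spectral gap of the relevant Laplacian on sections orthogonal to the slow modes.

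Third, I would feed this into the asymptotic theorem of \cite{hkkp_semistability} --- equivalently, the weight-filtration formalism recalled in Section~\ref{sec_weightfilt}: the trajectory of the flow for $M$ converges, and the rates of approach along its various subrepresentations are exactly the exponents $\beta_k\in\RR t\oplus\RR\log t\oplus\RR\log\log t\oplus\cdots$ attached to a balanced weight filtration of $M$ in the modular lattice of subrepresentations, the associated graded of which is polystable. Transporting this weight filtration back through the reduction of the previous step and using it to refine the Harder--Narasimhan--Seshadri filtration produces the canonical filtration $0=E_0\subset\cdots\subset E_n=E$ with labels $\beta_1<\cdots<\beta_n$, with $E_k/E_{k-1}$ a direct sum of stable bundles of some slope $\mu_k$, with $\norm{\log(h\mid_{E_k}(x))}=\beta_k+O(1)$, and with the leading term of $\beta_k$ as claimed.

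The main obstacle is not the finite-dimensional dynamics, which is already understood, but upgrading it to the PDE with enough precision and with the error bound \emph{uniform in} $x\in X$. One must prove that the component of $\log h(t)$ orthogonal to the slow modes is not merely bounded but $o(1)$ uniformly on $X$, and --- the delicate point --- that the accumulation of these transverse errors over the whole time interval does not shift the exponents at the $\log\log t$ scale or finer. This will require parabolic Schauder and $L^p$ estimates to propagate pointwise control from the ``zero mode'' to all of $X$, together with a {\L}ojasiewicz--Simon inequality for the Yang--Mills functional on the space of metrics that is sharp enough for the spectral gap it supplies to dominate the slowest iterated-logarithm rate. The monotonicity property is what closes the feedback between the finite-dimensional model and its transverse corrections, and, as the abstract indicates, it is also the ingredient that persists for higher-dimensional base manifolds.
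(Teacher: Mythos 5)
Your proposal correctly identifies the two ingredients the paper actually uses (monotonicity of the metric flow, and the iterated weight filtration asymptotics from \cite{hkkp_semistability}), and the peeling of the Harder--Narasimhan filtration to produce the $t$-linear part of $\beta_k$ matches the ``case without refinement'' in Section~\ref{sec_hym}. But your route for the subleading terms --- an infinite-dimensional center-manifold reduction with a spectral gap, closed by parabolic Schauder estimates and a {\L}ojasiewicz--Simon inequality --- is not what the paper does, and it has two genuine gaps. First, the claim that the transverse component decays \emph{exponentially} off a finite-dimensional slow space is wrong as stated: the non-harmonic modes are continuously forced by the slowly diverging harmonic (``slow'') modes through the nonlinearity, so they are slaved to them and decay only polynomially, $O(t^{-1})$ up to logarithmic corrections. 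Computing this slaved correction explicitly is exactly the role of the Green's operator term $z=y\bigl(1+G(y^{-1}wy)\bigr)$ in Section~\ref{sec_lozenge}; without it you cannot verify that the residual error in the flow equation is integrable in time, which is the property everything hinges on. Second, monotonicity (Proposition~\ref{prop_mono_hym}) compares two honest solutions, or a solution with a sub/supersolution obtained by multiplying by $\exp(\pm\int f)$; it does not control ``the orthogonal projection of $h(t)$ onto a slice,'' and it is not clear how your step relating that projection to the finite-dimensional quiver flow would be justified.

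The paper's actual mechanism is the reverse of yours: rather than projecting the PDE onto a finite-dimensional model, it \emph{constructs an explicit approximate solution} $g(t)$ --- block-diagonal exponentials of the Harder--Narasimhan rates times the iterated-logarithm gauge transformations produced by the lozenge-algebra procedure of Section~\ref{sec_lozenge} (time rescaling plus Green's function correction), applied to the infinite-dimensional lozenge algebra $\mathcal A^\bullet(X,\mathrm{End}(T_k))$ --- and checks that it satisfies \eqref{gauge_flow} up to a self-adjoint error bounded by an $L^1$ function of $t$. Proposition~\ref{prop_asym_criterion_hym} (proved by the same maximum-principle argument as monotonicity) then sandwiches the true solution between $C^{-1}h$ and $Ch$ pointwise on $X$, which is precisely the uniform-in-$x$ additive $O(1)$ on $\log h$. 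This makes your ``main obstacle'' paragraph moot: there is no accumulation-of-error issue to control by a {\L}ojasiewicz--Simon inequality (which in any case has no critical point to anchor to, since $h(t)$ diverges), because an $L^1$-in-time error is converted directly into a bounded multiplicative discrepancy. If you want to salvage your approach, the missing idea to import is exactly this $L^1$-error criterion together with the explicit slaved (Green's function) correction of the ansatz.
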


The main ingredients of the proof are the theory of weight filtrations introduced in our previous work~\cite{hkkp_semistability} and a monotonicity property of the HYM flow (Theorem~\ref{prop_mono_hym}).
In the case when $X$ is a compact K\"ahler manifold of higher dimension results on the limit of the Hermitian--Yang--Mills flow on the space of connections where obtained by Daskalopoulos--Wentworth~\cite{daskalopoulos_wentworth}, Jacob~\cite{jacob15}, Collins--Jacob~\cite{collins_jacob}, Sibley--Wentworth~\cite{sibley_wentworth}.
We anticipate that our result can be extended to higher dimensional $X$, but will describe the behavior of $h$ only outside a set of complex codimension two.

\subsubsection*{Modified curve shortening flow}

In Section~\ref{sec_curve_shortening} we explore the A-side analog of the HYM flow.
The example considered is a type of curve shortening flow on a punctured cylinder.
In explicit terms, the PDE is
\begin{equation}
\partial_tf=\rho(x,f)\partial_{xx}f
\end{equation}
which is a non-linear modification of the standard 1-dimensional heat equation.
The function $\rho$ is assumed to be positive except for a finite set of quadratic zeros along the $x$-axis which are the punctures of the cylinder.

We provide a heuristic argument to show that this PDE reduces to a system of ODEs in variables $y_i=|f(x_i)|/\pi$, $i\in\ZZ/n$, of the form
\begin{equation}
\frac{\dot{y}_i}{y_i}=\frac{\epsilon_{i-1}\epsilon_i}{m_i}y_{i-1}-\left(\frac{1}{m_i}+\frac{1}{m_{i+1}}\right)y_i+\frac{\epsilon_{i}\epsilon_{i+1}}{m_{i+1}}y_{i+1}
\end{equation}
The asymptotics of this system (which depend on the $m_i$ -- distances between punctures) where completely determined in~\cite{hkkp_semistability} and are related in this case to the structure of the partially wrapped Fukaya category of the punctured cylinder.

\begin{conj}
The PDE \eqref{flow_pde} has an $n$-dimensional center manifold on which the flow is approximated by the system \eqref{system_y} in the sense that error terms of solutions are bounded in coordinates $\log(y_i)$.
\end{conj}

\subsubsection*{Acknowledgements}

We would like to thank S.~Donaldson for the encouragements and constant attention to our work. We also are also very grateful to 
P.~Griffiths, A.~Petkov, T.~Pantev, K.~Fukaya  and C.~Simpson for several illuminating discussions we have had over the last months.
We are also thankful to UMiami Applied Math seminar members 
S.~Cantrell, C.~Costner and S.G.~Ruan for useful suggestions and references.  
The authors were supported   by a Simons Investigators Grant, a Simons Collaboration   research grant, NSF DMS 150908, ERC Gemis,
DMS-1265230, DMS-1201475 OISE-1242272 PASI. Simons collaborative Grant - HMS. HSE-grant,
HMS and automorphic forms. The second author is partially supported by Laboratory of Mirror
Symmetry NRU HSE, RF government grant, ag. 14.641.31.000.



\section{Weight filtrations in modular lattices}
\label{sec_weightfilt}

This section presents a brief summary of the theory of stability in modular lattices and the weight--type filtrations introduced in our previous work~\cite{hkkp_semistability} to which we refer to for more examples and details.
We begin with some basic definitions.
A \textbf{lattice} is a partially ordered set, $L$, in which any two elements $a,b\in L$ have a least upper bound $a\vee b$ and greatest lower bound $a\wedge b$. 
The following two properties of a lattice will be crucial.
\begin{itemize}
\item 
\textbf{modularity}: $(x\wedge b)\vee (a\wedge b)=((x\wedge b)\vee a)\wedge b$
for all $x,a,b\in L$

\item
\textbf{finite length}: There is an upper bound on the length, $n$, of any chain $a_0<a_1<\ldots<a_n$ of elements in $L$.
\end{itemize}
In particular, unless $L=\emptyset$, there are least and greatest elements $0$ and $1$ in any finite length lattice.
A lattice is \textbf{artinian} if it is modular and has finite length. 
A rich class of examples to keep in mind is any collection of subspaces of a given finite-dimensional vector space which is closed under sum and intersection.

There is a good theory of slope stability for artinian lattices.
First, we need to define the analog of the Grothendieck K-group.
We use the notation $[a,b]:=\{x\in L\mid a\leq x\leq b\}$ for the interval from $a$ to $b$ in a lattice.
Given an artinian lattice, $L$, we let $K(L)$ be the abelian group with generators $\ol{[a,b]}$, $a\leq b$, and relations
\begin{equation}
\ol{[a,b]}+\ol{[b,c]}=\ol{[a,c]},\qquad \ol{[a,a\vee b]}=\ol{[a\wedge b,b]}.
\end{equation}
Denote by $K^+(L)\subset K(L)$ the sub-semigroup generated by elements $\ol{[a,b]}$, $a<b$.
The Jordan--H\"older--Dedekind theorem implies that $K(L)$ (resp. $K^+(L)$) is a free abelian group (resp. semigroup).

Stability depends on a choice of \textbf{polarization} (or \textit{central charge}), which is a group homomorphisms $Z:K(L)\to\CC$ such that the image of $K^+(L)$ is contained in the right half--plane $\{\mathrm{Re}(z)>0\}$.
Then for each $a<b\in L$ we get a well-defined \textit{phase}
\begin{equation}
\phi([a,b]):=\mathrm{Arg}(Z(\ol{[a,b]}))\in (-\pi/2,\pi/2).
\end{equation}
A polarized lattice is \textbf{semistable} if $\phi([0,x])\leq\phi(L)$ for any $x\neq 0$.
Any polarized artinian lattice breaks up canonically into semistable ones.
More precisely, there is a unique chain $0=a_0<a_1<\ldots<a_n=1$, called the \textbf{Harder--Narasimhan filtration},
such that $[a_{k-1},a_k]$ is semistable for $k=1,\ldots,n$ and
$\phi([a_0,a_1])>\ldots>\phi([a_{n-1},a_n])$.
We remark that semistability imposes no restrictions on the underlying artinian lattice since we could have chosen $Z$ purely real, for instance.

\begin{remark}
While slope stability is often introduced in the context of abelian categories, it really depends only on the lattice structure. 
Furthermore, there are many natural examples of modular lattices which do not come from abelian categories.
\end{remark}

An $\RR$-filtration in $L$ is given by real numbers $\lambda_1<\ldots<\lambda_n$ and a chain $0=a_0<a_1<\ldots<a_n=1$. 
We think of $\lambda_k$ as being associated with the interval $[a_{k-1},a_k]$.
A lattice is \textbf{complemented} if any $a\in L$ has a \textbf{complement}, i.e. an element $b\in L$ with $a\wedge b=0$, $a\vee b=1$.
This corresponds to the notion of \textit{semisimplicity} in representation theory.
An $\RR$-filtration $0=a_0<a_1<\ldots<a_n=1$ labeled by $\lambda_1<\ldots<\lambda_n$ is \textbf{paracomplemented} if for any $1\leq k\leq l\leq n$ with $\lambda_l-\lambda_k<1$ the interval $[a_{k-1},a_l]$ is complemented.
Fixing such an $\RR$-filtration, there is another artinian lattice whose elements are given by choices of $b_k\in[a_{k-1},a_k]$, $k=1,\ldots,n$ such that $[b_k,b_l]$ is complemented for $1\leq k<l\leq n$ with $\lambda_k-\lambda_k\leq 1$.
Denote this lattice by $\mathcal M(a,\lambda)$.
If furthermore $L$ has an $\RR$-valued polarization $X:K_0(L)\to\RR$ then $\mathcal M(a,\lambda)$ can be given the polarization
\begin{equation}
Z([b,c]):=\sum_{k=1}^n(1+i\lambda_k)X([b_k,c_k]).
\end{equation}

The following theorem, proven in \cite{hkkp_semistability}, provides a canonical way of breaking up an artinian lattice with $\RR$-polarization into complemented ones.

\begin{thmdef}\label{balanced_chain_thm}
Let $L$ be an artinian lattice and $X:K(L)\to\RR$ an $\RR$-valued polarization.
Then there exists a unique paracomplemented $\RR$-filtration $(a,\lambda)$, called the \textbf{weight filtration}, such that $\mathcal M(a,\lambda)$ is semistable with phase $\phi=0$.
\end{thmdef}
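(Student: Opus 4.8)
The plan is to realize the weight filtration as the solution of a convex variational problem, and to recognize the condition ``$\mathcal M(a,\lambda)$ is semistable of phase $0$'' as its first-order optimality (Euler--Lagrange) condition.

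First I would collect all $\RR$-filtrations of $L$ into one metric space $\mathcal B(L)$. For each maximal chain $C=(0=c_0<\cdots<c_N=1)$ take the Euclidean cone $A_C=\{\mu\in\RR^N\mid\mu_1\le\cdots\le\mu_N\}$, a point $\mu$ of which represents the $\RR$-filtration obtained from $C$ by merging consecutive steps carrying equal values of $\mu$; glue the cones $A_C$ along the faces on which the associated filtrations agree, and equip each with the quadratic form $\|\mu\|^2=\sum_i\mu_i^2\,X(\ol{[c_{i-1},c_i]})$, which is consistent on overlaps because $X$ is additive. Write $o$ for the cone point (the trivial filtration $0<1$ with label $0$); then the ``$X$-barycenter of the labels'' $\Xi(a,\lambda)=\sum_k\lambda_k\,X(\ol{[a_{k-1},a_k]})$ is a function that is linear on each $A_C$ and vanishes at $o$. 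For the lattice of subspaces of a vector space $\mathcal B(L)$ is a $\mathrm{CAT}(0)$ space (a space of filtrations, closely related to the Tits building); for a general artinian lattice I would need the analogous convexity properties, and this is where the Jordan--H\"older--Dedekind structure of $L$, together with the classical fact that complemented modular lattices are relatively complemented, must be used.

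Let $\mathcal P\subset\mathcal B(L)$ be the subset of paracomplemented filtrations. Inside each cone $A_C$ it is cut out by finitely many closed half-spaces of the form $\{\mu_l-\mu_k\ge 1\}$, hence $\mathcal P$ is closed, and it is nonempty because labelling a maximal chain by $1,2,\dots,N$ is paracomplemented. Since $\|\cdot\|^2$ is continuous and coercive and $\mathcal B(L)$ is locally compact, $\|\cdot\|^2$ attains a minimum on $\mathcal P$. Because paracomplementedness depends only on the underlying chain and on the differences $\lambda_l-\lambda_k$, the set $\mathcal P$ is invariant under the ``label-shift'' $\mu\mapsto\mu+c(1,\dots,1)$, which preserves each $A_C$; along such an orbit $\|\mu+c(1,\dots,1)\|^2=\|\mu\|^2+2c\,\Xi(\mu)+c^2X(\ol{[0,1]})$ is strictly convex in $c$ and minimal exactly where $\Xi=0$, so any minimizer $(a,\lambda)$ satisfies $\Xi(a,\lambda)=0$, i.e.\ $\im Z(\mathcal M(a,\lambda))=0$. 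Granting the convexity of $\mathcal B(L)$ and the geodesic convexity of $\mathcal P$, this minimizer is unique. It then remains to match first-order optimality with semistability: the tangent cone of $\mathcal P$ at $(a,\lambda)$ should be dual to the polarized lattice $\mathcal M(a,\lambda)$, in that feasible directions are indexed by elements $y=(b_k)\in\mathcal M(a,\lambda)$ --- the complementation conditions in the definition of $\mathcal M(a,\lambda)$ being precisely those under which the corresponding infinitesimal refinement of $(a,\lambda)$ stays paracomplemented --- and the directional derivative of $\|\cdot\|^2$ along the direction attached to $y$ has the sign of $-\im Z([0,y])=-\sum_k\lambda_k\,X(\ol{[a_{k-1},b_k]})$. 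Then stationarity (nonnegative derivative in every feasible direction) is exactly $\phi([0,y])\le 0$ for all $y\neq 0$ in $\mathcal M(a,\lambda)$, which together with $\im Z(\mathcal M(a,\lambda))=0$ says that $\mathcal M(a,\lambda)$ is semistable of phase $0$. Hence the minimizer is balanced; conversely any balanced paracomplemented filtration satisfies this same first-order condition, so by convexity it equals the minimizer. This establishes both existence and uniqueness.

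The main obstacle, I expect, is the geometry of $\mathcal B(L)$ for a general artinian lattice: proving the convexity needed for uniqueness, and describing the tangent cone of $\mathcal P$ precisely enough to read off the Euler--Lagrange equation in the stated form. I would try to bootstrap from the complemented case, which is clean: when $L$ itself is complemented every interval is complemented, so every family $(b_k)$ lies in $\mathcal M(a,\lambda)$, and a short argument --- comparing, for each $j$, the elements of $\mathcal M(a,\lambda)$ that send the first $j$ coordinates to the tops of their intervals and the rest to the bottoms, and vice versa --- forces all partial sums $\sum_{k\le j}\lambda_k\,X(\ol{[a_{k-1},a_k]})$ to vanish, hence $\lambda\equiv 0$, so the trivial filtration is the unique balanced one. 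One then propagates this along a Jordan--H\"older filtration of $L$, or else runs the whole argument combinatorially by induction on the length. A secondary point is reconciling the strict inequality $\lambda_l-\lambda_k<1$ in the definition of ``paracomplemented'' with the non-strict $\lambda_l-\lambda_k\le 1$ in the definition of $\mathcal M(a,\lambda)$; I expect the borderline case $\lambda_l-\lambda_k=1$ to be handled by showing the minimizer avoids the relevant degenerations, or by perturbing $X$ and passing to a limit. Once the geometry of $\mathcal B(L)$ is understood, the other steps are straightforward.
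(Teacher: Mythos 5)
This paper does not actually prove the statement: Theorem/Definition~\ref{balanced_chain_thm} is imported verbatim from the authors' earlier work \cite{hkkp_semistability} and used here as a black box, so there is no in-text proof to compare yours against. On its own terms, your variational picture is the right heuristic --- in the simplest nontrivial example (the lattice $0<S<E$ of subrepresentations of $k\xrightarrow{\,\sim\,}k$, where the unique non-complemented interval is $[0,E]$) minimizing $\mu_1^2+\mu_2^2$ on $\{\mu_2-\mu_1\geq 1\}$ does give the correct weights $\mp 1/2$, your barycenter-shift argument correctly forces $\mathrm{Im}\,Z$ of the total object to vanish, and your complemented base case is a correct and complete argument. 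But the proposal is a program, not a proof, and the gaps sit exactly where the content of the theorem is.

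Concretely: (1) An artinian lattice can have infinitely many maximal chains (already the subspace lattice of $\CC^2$), so $\mathcal B(L)$ is an infinite gluing of cones and is \emph{not} locally compact; ``continuous $+$ coercive $+$ locally compact'' does not produce a minimizer. The standard repair --- a strictly convex, lower semicontinuous, coercive function on a complete $\mathrm{CAT}(0)$ space attains a unique minimum --- requires knowing that $\mathcal B(L)$ (a weighted cone over the order complex of $L$) is $\mathrm{CAT}(0)$ for an arbitrary modular lattice of finite length; this is essentially the orthoscheme-complex nonpositive-curvature problem, a genuinely hard question in its own right, not a routine verification. (2) Even granting that, geodesic convexity of $\mathcal P$ is only checked chamber-by-chamber; a geodesic joining points supported on different chains crosses faces where the underlying filtration degenerates, and nothing in your sketch controls the half-space constraints there. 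Without this, neither uniqueness of the minimizer nor the converse implication (every balanced filtration satisfies the first-order condition, hence equals the minimizer) follows. (3) The identification of the tangent cone of $\mathcal P$ at the minimizer with $\mathcal M(a,\lambda)$, and of directional derivatives with $-\mathrm{Im}\,Z([0,y])$, is asserted rather than proved, and this is where the theorem actually lives. In particular the discrepancy between the strict inequality $\lambda_l-\lambda_k<1$ in ``paracomplemented'' and the non-strict $\leq 1$ in the definition of $\mathcal M(a,\lambda)$ is exactly the question of which boundary directions of $\mathcal P$ are feasible; it cannot be dismissed by perturbing $X$ and passing to a limit, because the weight filtration jumps across walls in the space of polarizations (this wall-crossing is a feature the present paper exploits in Section~\ref{sec_curve_shortening}). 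To turn the sketch into a proof you would either have to establish the $\mathrm{CAT}(0)$/convexity package for modular lattices, or abandon the variational framing in favor of an induction on the length of $L$ using the Jordan--H\"older--Dedekind theorem and relative complements, along the lines your base case already gestures at.
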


Note that the weight filtration is trivial (i.e. $n=1$, $\lambda_1=0$) if and only if $L$ is complemented.
There is a sublattice
\begin{equation}\label{sublattice_0}
\mathcal M(a,\lambda)^0:=\{x\in \mathcal M(a,\lambda)\mid x=0\text{ or }\phi([0,x])=0\}
\end{equation}
and $Z$ restricts to an $\RR$--valued polarization on it.
If this lattice is not complemented (which corresponds to the case when $L$ is \textit{polystable}) we may apply the theorem again and get a refined filtration of $L$ indexed by $\RR^2$ with the lexicographical order.
Proceeding inductively until reaching a complemented lattice we get the \textbf{iterated weight filtration} indexed by the space of functions
\begin{equation}
\RR^\infty=\RR\log t\oplus \RR\log\log t\oplus\ldots
\end{equation}
ordered by growth as $t\to+\infty$.
Examples in \cite{hkkp_semistability} show that arbitrarily deep refinement can occur.

\section{K\"ahler type DG--algebras}
\label{sec_lozenge}

In this section we introduce an algebraic framework which unifies both
\begin{enumerate}[1)]
\item
$U(n)$ Yang--Mills bundles over Riemann surfaces and
\item
Quiver representations with harmonic metric.
\end{enumerate}
This framework will be used to construct asymptotic solutions to certain gradient flows by an iterative procedure.
It is a generalization and reinterpretation of the $*$-bimodule formalism used in \cite{hkkp_semistability}.

\subsection{Motivation}

Let $X$ be a compact Riemann surface with K\"ahler form $\omega$ and $E$ a holomorphic bundle which is a direct sum of stable bundles of various slopes. 
According to a theorem of Narasimhan--Seshadri~\cite{narasimhan_seshadri}, there exists a Hermitian metric on $E$ such that the corresponding compatible connection has constant central curvature $F$, i.e. satisfies the Yang--Mills equation $d^*F=0$.
A detailed study of the Yang--Mills functional for bundles over a Riemann surface was initiated by Atiyah--Bott~\cite{atiyah_bott}.
The deformation theory of $E$ is controlled by the DG-algebra 
\begin{equation}
A:=\mathcal A^\bullet(X,\mathrm{End}(E))
\end{equation}
of forms with values in the endomorphism bundle of $E$. 
The differential on $A$ is induced by the connection on $E$ and the product is a combination of the wedge product of forms and the composition of endomorphisms.
Moreover $A$ has a trace
\begin{equation}
\tau:A^2\to\mathbb C,\qquad \tau(\alpha)=\int_X\mathrm{tr}(\alpha),
\end{equation}
a $*$-structure $\alpha\mapsto\alpha^*$ coming from the metric on $E$, and a splitting $A^1=A^{1,0}\oplus A^{0,1}$ coming from the complex structure on $X$.
The K\"ahler form $\omega$ on $X$ gives an isomorphism $L:A^0\to A^2$, $\alpha\mapsto \omega\wedge\alpha$.
The condition on the metric on $E$ implies the relation
\begin{equation}
\Delta=2\Delta_\partial=2\Delta_{\overline{\partial}}
\end{equation}
among the Laplacians for the differentials $d,\partial,\overline{\partial}$ on $A$.
We will encounter finite--dimensional instances of algebras with these kinds of structures in our analysis of the Yang--Mills flow. 
For this reason we summarize the relevant axiomatics below.

\subsection{Lozenge algebras}

A \textbf{curved DG-algebra} over a field $\mathbf{k}$ is a $\ZZ$-graded associative algebra, $A$, with unit $1$, $\mathbf{k}$-linear derivation $d:A^\bullet\to A^{\bullet+1}$, and element $\theta\in A^2$, the \textit{curvature}, such that $d\theta=0$ and $d^2a=[\theta,a]$ for any $a\in A$.
An element $\alpha\in A^1$ gives rise to a deformation $(A,\widetilde{d},\widetilde{\theta})$ of $(A,d,\theta)$ where
\begin{equation}
\widetilde{d}a:=da+[\alpha,a], \qquad \widetilde{\theta}:=\theta+d\alpha+\alpha^2.
\end{equation}
In the following we will always have $d^2=0$ or equivalently $\theta$ is central.
In this case $A$ can be considered as an ordinary DG-algebra, but $\theta$ provides additional data.

A \textbf{Calabi--Yau structure} of dimension $n$ on a DG-algebra $A$ over a field $\mathbf{k}$ with finite dimensional total cohomology $H^\bullet(A)$ is given by a linear functional
\begin{equation}
\tau:A^n\to \mathbf{k}
\end{equation}
called the \textit{trace} so that $\tau([a,b])=0$, $\tau(da)=0$ for all $a,b\in A$, and $(a,b)\mapsto \tau(ab)$ induces perfect pairings $H^k(A)\otimes H^{n-k}(A)\to\mathbf{k}$.
Here and throughout, $[a,b]$ denotes the supercommutator which is $ab-(-1)^{|a||b|}ba$ for homogeneous elements $a,b$ of degrees $|a|,|b|$ respectively.

A \textbf{$*$--structure} on a curved DG-algebra $A$ over $\CC$ is given by a $\CC$-antilinear involution $a\mapsto a^*$ such that
\begin{equation}
(ab)^*=(-1)^{|a||b|}b^*a^*,\qquad (da)^*=da^*,\qquad 1^*=1, \qquad \theta^*=-\theta.
\end{equation}
See~\cite{deligne_morgan_susy} for the categorical justification for this convention.
If $A$ has a trace then we require $\tau(a^*)=\overline{\tau(a)}$.

\begin{df}
A \textbf{lozenge algebra} is a curved DG-algebra $A$ concentrated in degrees $0,1,2$ with $d^2=0$, trace $\tau:A^2\to\CC$ giving a Calabi--Yau structure of dimension $2$, a direct sum decomposition $A^1=A^{1,0}\oplus A^{0,1}$ as an $A^0$ bimodule such that 
\begin{equation}
(A^{1,0})^*=A^{0,1},\qquad A^{1,0}A^{1,0}=0
\end{equation}
and an element $\omega\in A^2$ with 
\begin{equation}
\omega^*=\omega,\qquad [\omega,a]=0
\end{equation}
inducing an isomorphism $L:A^0\to A^2$, $a\mapsto \omega\wedge a$ with inverse $\Lambda:A^2\to A^0$.
Furthermore, we require that the following bilinear maps
\begin{gather}
\overline{A^{1,0}}\otimes A^{1,0}\to\CC,\qquad a\otimes b\mapsto -i\tau(a^*b) \\
\overline{A^{0,1}}\otimes A^{0,1}\to\CC,\qquad a\otimes b\mapsto i\tau(a^*b) \\
\overline{A^0}\otimes A^0\to\CC,\qquad a\otimes b\mapsto \tau(\omega a^*b)
\end{gather}
are positive definite, thus providing scalar products on $A^0$ and $A^1$.
Finally, the Yang--Mills condition on the curvature
\begin{equation}\label{abstract_ym_condition}
d\Lambda\theta=0
\end{equation}
should hold.
\end{df}

Suppose $A$ is a lozenge algebra, then the differential $d:A^0\to A^1$ is a sum of $\partial: A^0\to A^{1,0}$ and $\overline{\partial}: A^0\to A^{0,1}$, and similarly $d:A^1\to A^2$ is a sum of $\partial: A^{0,1}\to A^2$ and $\overline{\partial}: A^{1,0}\to A^2$.
\begin{equation} 
\begin{tikzcd}
& A^{1,0} \arrow{dr}{\overline{\partial}} \\
A^0\arrow{ur}{\partial}\arrow{dr}{\overline{\partial}} & & A^2 \\
& A^{0,1} \arrow{ur}{\partial}
\end{tikzcd}
\end{equation}
The usual first-order K\"ahler identities can be used to define adjoints of these differentials.

\begin{lem}
Adjoints of $\partial$, $\overline{\partial}$, and $d$ are given by 
\begin{equation}\label{kaehler_ident1}
\partial^*=i[\Lambda,\overline{\partial}],\qquad \overline{\partial}^*=-i[\Lambda,\partial],\qquad d^*=i[\Lambda,\overline{\partial}-\partial]
\end{equation}
respectively.
\end{lem}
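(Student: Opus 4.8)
The plan is to verify each of the three formulas directly from the lozenge-algebra axioms; none of the analytic input used in the classical proof of the Kähler identities (osculation to a flat metric) is needed. First I would fix the scalar product on $A^2$, which is not given explicitly but is forced if $L\colon A^0\to A^2$ is to be an isometry: set $\langle\xi,\eta\rangle_{A^2}:=\tau\bigl((\Lambda\xi)^*\eta\bigr)=\langle\Lambda\xi,\Lambda\eta\rangle_{A^0}$. That this is a positive Hermitian form is inherited from the one on $A^0$ via the bijection $\Lambda$, using $\omega^*=\omega$, centrality of $\omega$, and $L\Lambda=\mathrm{id}$. Since $d=\partial+\overline\partial$ as operators of degree $+1$ and $[\Lambda,-]$ is linear, the identity $d^*=i[\Lambda,\overline\partial-\partial]$ is immediate from $\partial^*=i[\Lambda,\overline\partial]$ and $\overline\partial^*=-i[\Lambda,\partial]$, so it suffices to prove those two.

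The only computational tool is the integration-by-parts identity from the Calabi--Yau structure: since $d\colon A^1\to A^2$ and $\tau\circ d=0$, for homogeneous $x,y\in A$ with $|x|+|y|=1$ one has
\[
\tau\bigl((dx)\,y\bigr)+(-1)^{|x|}\,\tau\bigl(x\,(dy)\bigr)=\tau\bigl(d(xy)\bigr)=0 .
\]
Along with this I would use: (i) the $*$-involution intertwines $\partial$ and $\overline\partial$ — comparing $(1,0)$- and $(0,1)$-components in $(da)^*=d(a^*)$ gives $(\partial a)^*=\overline\partial(a^*)$ and $(\overline\partial a)^*=\partial(a^*)$, and similarly on $A^1$; (ii) $A^{1,0}A^{1,0}=0$, hence also $A^{0,1}A^{0,1}=0$; (iii) $\omega$ is central and $L\Lambda=\mathrm{id}_{A^2}$, so a factor $\Lambda$ applied to an element of $A^2$ can be traded, inside a trace, for left multiplication by $\omega$. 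One then checks the four nonzero components of $\partial$ and $\overline\partial$ separately. For $\partial\colon A^0\to A^{1,0}$, taking $a\in A^0$ and $\mu\in A^{1,0}$ one has $\langle\partial a,\mu\rangle_{A^{1,0}}=-i\,\tau\bigl((\partial a)^*\mu\bigr)=-i\,\tau\bigl((\overline\partial a^*)\mu\bigr)$ and $\langle a,i\Lambda\overline\partial\mu\rangle_{A^0}=i\,\tau\bigl(\omega\,a^*\Lambda\overline\partial\mu\bigr)=i\,\tau\bigl(a^*\overline\partial\mu\bigr)$, so the desired equality is exactly $\tau\bigl(d(a^*\mu)\bigr)=0$ once the vanishing term $(\partial a^*)\mu\in A^{1,0}A^{1,0}$ is dropped. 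The component $\overline\partial\colon A^0\to A^{0,1}$ is entirely analogous with $\partial$ and $\overline\partial$ interchanged, the sign being dictated by the normalization $a\otimes b\mapsto i\tau(a^*b)$ on $A^{0,1}$. For the two components landing in $A^2$ (namely $\partial\colon A^{0,1}\to A^2$ and $\overline\partial\colon A^{1,0}\to A^2$) one first rewrites the $A^2$-scalar product through $\Lambda$, applies $L\Lambda=\mathrm{id}$ once more, and is again reduced to a single use of $\tau\circ d=0$ together with the vanishing of one off-type product; alternatively, these two cases are the transposes of the first two under the Calabi--Yau pairings $A^0\times A^2\to\CC$ and $A^{1,0}\times A^{0,1}\to\CC$ and so follow formally.

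The hard part will be purely bookkeeping, not conceptual: the factors of $i$ in the statement are pinned down by the three normalizations $-i\tau(a^*b)$, $i\tau(a^*b)$, $\tau(\omega a^*b)$ of the Hermitian forms, and the Koszul signs must be carried faithfully through the derivation identity and the rule $(ab)^*=(-1)^{|a||b|}b^*a^*$. The hypotheses genuinely doing the work are just $\tau\circ d=0$ (integration by parts), $A^{1,0}A^{1,0}=0$ (so that $d|_{A^{1,0}}=\overline\partial$ and the unwanted products vanish), and the compatibility of $*$ with both $d$ and the splitting $A^1=A^{1,0}\oplus A^{0,1}$; centrality of $\omega$ and $L\Lambda=\mathrm{id}$ are used only to move $\Lambda$ in and out of traces. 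With these in hand there is no real obstruction, and the curvature $\theta$ plays no role in this lemma since $d^2=0$ makes $d$ an ordinary differential.
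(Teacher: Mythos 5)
Your argument is correct and is essentially the paper's proof: the single computation $\langle\partial a,b\rangle=-i\tau((\partial a)^*b)=-i\tau((\overline\partial a^*)b)=i\tau(a^*\overline\partial b)=\langle a,i\Lambda\overline\partial b\rangle$, driven by $\tau\circ d=0$, the identity $(\partial a)^*=\overline\partial(a^*)$, and $A^{1,0}A^{1,0}=0$, with the remaining components handled ``entirely similarly.'' The one point where you go beyond the paper is in making explicit the inner product on $A^2$ (via $\Lambda$, so that $L$ is an isometry), which the definition of a lozenge algebra leaves implicit but which is needed for the two components landing in $A^2$; your sign and normalization bookkeeping checks out in all four cases.
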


\begin{proof}
We consider $\partial:A^0\to A^{1,0}$, the other cases are entirely similar.
Let $a\in A^0$ and $b\in A^{1,0}$, then
\begin{equation}
\langle \partial a,b\rangle = -i\tau\left((\partial a)^*b\right)=-i\tau(\overline{\partial} a^*b)=i\tau(a^* \overline{\partial} b)=\tau(\omega a^*\partial^*b)=\langle a,\partial^*b\rangle
\end{equation}
where we used $\tau(\overline{\partial}(a^*b))=\tau(d(a^*b))=0$.
\end{proof}

The three Laplacians 
\begin{equation}
\Delta=dd^*+d^*d,\qquad \Delta_{\overline{\partial}}=\overline{\partial}\overline{\partial}^*+\overline{\partial}^*\overline{\partial},\qquad \Delta_{\partial}=\partial\partial^*+\partial^*\partial
\end{equation}
are related by the following lemma.

\begin{lem}
\begin{equation}\label{kaehler_ident2}
\Delta=2\Delta_{\overline{\partial}}=2\Delta_{\partial}
\end{equation}
\end{lem}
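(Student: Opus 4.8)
The plan is to derive the identity formally from the first-order Kähler identities already established in \eqref{kaehler_ident1}, exactly as in the classical case, treating $\Lambda$ (or rather $[\Lambda,-]$) as the algebraic substitute for the adjoint Lefschetz operator. First I would expand $\Delta = dd^* + d^*d$ using $d = \partial + \overline\partial$ on $A^0$ and $A^1$, and $d^* = i[\Lambda,\overline\partial - \partial]$. This gives
\begin{equation}
\Delta = (\partial+\overline\partial)\,i[\Lambda,\overline\partial-\partial] + i[\Lambda,\overline\partial-\partial]\,(\partial+\overline\partial).
\end{equation}
Grouping the terms, one gets $\Delta_\partial + \Delta_{\overline\partial}$ (from the $\partial\bar\partial^*+\bar\partial^*\partial$ and $\bar\partial\partial^*+\partial^*\bar\partial$ pieces, after substituting $\partial^* = i[\Lambda,\overline\partial]$, $\overline\partial^* = -i[\Lambda,\partial]$) plus a collection of "cross terms" of the form $i\partial[\Lambda,\overline\partial] + i[\Lambda,\overline\partial]\partial - i\overline\partial[\Lambda,\partial] - i[\Lambda,\partial]\overline\partial$ and the genuinely mixed $\partial\partial^* + \partial^*\partial$ lookalikes that need to be shown to vanish or to cancel in pairs.

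The key step, and what I expect to be the main obstacle, is establishing the vanishing of the cross terms. In the classical setting this rests on $\partial^2 = \overline\partial^2 = 0$ together with the fact that $\{\partial,\overline\partial^*\} = \{\overline\partial,\partial^*\} = 0$, and the latter is itself a consequence of the first-order identities plus the Jacobi/graded-commutator relations among $\partial, \overline\partial, L, \Lambda$. So before touching the Laplacians I would record the needed structural relations in our algebraic framework: that $\partial$ and $\overline\partial$ are odd derivations with $\partial^2 = \overline\partial^2 = 0$ and $\partial\overline\partial + \overline\partial\partial = d^2 = 0$ (using $d^2 = 0$ in a lozenge algebra and the bidegree decomposition); that $L = \omega\wedge(-)$ commutes with $d$ (since $\omega$ is central and $d\omega$ sits in $A^3 = 0$, so automatically $d\omega = 0$), hence $[d,\Lambda]$ is computed from the adjoint of $[d, L] = 0$; and crucially the graded commutators $[L,\partial]$, $[L,\overline\partial]$ — which here are forced: $L$ has degree $+2$, so $L\partial$ and $\partial L$ land in $A^3 = 0$, meaning these actually vanish on the nose, and dually $[\Lambda,\partial]$, $[\Lambda,\overline\partial]$ are the only surviving pieces. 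This rigidity of the three-term-complex situation should make the bookkeeping substantially shorter than in the general Kähler case.

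With those relations in hand, the cross terms $\{\partial,\overline\partial^*\} = -i\{\partial,[\Lambda,\partial]\}$ and $\{\overline\partial,\partial^*\} = i\{\overline\partial,[\Lambda,\overline\partial]\}$ are expanded via the graded Jacobi identity: e.g. $\{\partial,[\Lambda,\partial]\} = [[\partial,\Lambda],\partial] + (-1)^{\cdots}[\Lambda,\{\partial,\partial\}] = [[\partial,\Lambda],\partial]$, and I would then show $[[\partial,\Lambda],\partial]$ vanishes using that $[\partial,\Lambda]$ is (up to a sign and factor of $i$) $\overline\partial^*$, reducing to $\{\overline\partial^*,\partial\} = 0$, which closes the loop — so these must be shown to be consequences of the perfect-pairing/adjointness setup rather than circular. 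The cleanest route is: prove $\{\partial,\overline\partial^*\} = 0$ directly by adjointness, namely $\langle \{\partial,\overline\partial^*\} a, b\rangle = \langle \overline\partial a, \overline\partial(\text{something})\rangle$-type manipulation showing it is self-adjoint and nilpotent-related, or alternatively observe $\{\partial,\overline\partial^*\}$ is the adjoint of $\{\overline\partial,\partial^*\}$ and that their sum relates to $\{d,d^*\}$-type expressions that are manifestly symmetric — then a positivity or dimension argument forces vanishing. Once all cross terms are gone, what remains is precisely $\Delta = \Delta_\partial + \Delta_{\overline\partial}$ together with the separate identity $\Delta_\partial = \Delta_{\overline\partial}$ (proved the same way, comparing $\partial\partial^* + \partial^*\partial = -i\{\partial,[\Lambda,\overline\partial]\}$ with its $\overline\partial$-counterpart and using $\partial\overline\partial = -\overline\partial\partial$), and combining the two yields $\Delta = 2\Delta_\partial = 2\Delta_{\overline\partial}$.
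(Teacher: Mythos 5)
Your plan follows the same route as the paper's proof: expand $\Delta$ using the first-order identities \eqref{kaehler_ident1}, kill the cross terms, and identify $\Delta_\partial$ with $\Delta_{\overline{\partial}}$ by a graded Jacobi manipulation. The one place where you hesitate --- the vanishing of $\{\partial,\overline{\partial}^*\}=-i\{\partial,[\Lambda,\partial]\}$, which you worry is circular and for which you offer a vague ``positivity or dimension argument'' fallback --- is in fact immediate and needs no adjointness input: writing it out,
\begin{equation*}
\partial(\Lambda\partial-\partial\Lambda)+(\Lambda\partial-\partial\Lambda)\partial
=\Lambda\partial^2-\partial^2\Lambda=[\Lambda,\partial^2]=0,
\end{equation*}
using only $\partial^2=0$, which holds in a lozenge algebra for degree reasons (the component of $d$ out of $A^{1,0}$ is by definition $\overline{\partial}$, so $\partial$ vanishes there). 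Equivalently, the super-Jacobi identity you invoke gives $\{\partial,[\Lambda,\partial]\}=\{[\partial,\Lambda],\partial\}=-\{\partial,[\Lambda,\partial]\}$, so the expression equals its own negative and vanishes; there is no loop to close. The paper's two-line proof is exactly the committed version of your plan: it records $\Delta=\Delta_\partial+\Delta_{\overline{\partial}}$ (the cross terms vanishing as above) and then computes $\Delta_\partial=i[[\Lambda,\overline{\partial}],\partial]=-i[[\Lambda,\partial],\overline{\partial}]=\Delta_{\overline{\partial}}$ via the Jacobi identity together with $[\partial,\overline{\partial}]=0$, which follows from $d^2=0$. So your argument is correct once you settle on the Jacobi/nilpotence route and discard the speculative alternatives.
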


\begin{proof}
First,
\begin{equation}
\Delta=(\partial^*+\overline{\partial}^*)(\partial+\overline{\partial})+(\partial+\overline{\partial})(\partial^*+\overline{\partial}^*)=\Delta_\partial+\Delta_{\overline{\partial}}.
\end{equation}
Since $d^2=0$ by assumption we get $[\partial,\overline{\partial}]=0$ (anticommutator) and
by \eqref{kaehler_ident1}:
\begin{equation}
\Delta_\partial=[\partial^*,\partial]=i[[\Lambda,\overline{\partial}],\partial]=-i[[\Lambda,\partial],\overline{\partial}]=[\overline{\partial}^*,\overline{\partial}]=\Delta_{\overline{\partial}}
\end{equation}
\end{proof}

\begin{lem}\label{lem_lozenge_semisimple}
Suppose $A$ is a lozenge algebra with $A^0$ finite--dimensional, then $A^0$ is a $C^*$-algebra, in particular semisimple, i.e. a product of matrix algebras.
\end{lem}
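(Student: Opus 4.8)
The plan is to show that the $*$-structure on $A$ restricts to $A^0$ to make it a finite-dimensional $C^*$-algebra, and then invoke the structure theorem for $C^*$-algebras. First I would observe that $A^0$ is a $\CC$-algebra with unit $1$ and a $\CC$-antilinear involution $a\mapsto a^*$, since the axioms of a $*$-structure give $(ab)^*=b^*a^*$ in degree $0$ (the sign $(-1)^{|a||b|}$ is trivial), $(a^*)^*=a$, and $1^*=1$. So $A^0$ is a finite-dimensional $*$-algebra over $\CC$. To promote this to a $C^*$-algebra it suffices to exhibit a norm satisfying the $C^*$-identity, or—more conveniently in the finite-dimensional setting—to show that the $*$-operation is \emph{positive} in the sense that $a^*a=0$ implies $a=0$; equivalently, that the trace form $a\mapsto\tau(\omega\, a^*a)$ is positive definite, which is exactly the third positivity axiom in the definition of a lozenge algebra (with $b=a$).

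The key step is therefore to extract, from positive-definiteness of $\langle a,b\rangle:=\tau(\omega a^*b)$, the conclusion that $A^0$ is semisimple and then a product of matrix algebras. I would argue as follows. The bilinear form $\langle\,\cdot\,,\,\cdot\,\rangle$ is a Hermitian inner product on $A^0$ that is compatible with the algebra structure: indeed $\langle ca,b\rangle=\tau(\omega a^*c^*b)$, while $\langle a,c^*b\rangle=\tau(\omega a^*c^*b)$, so left multiplication $L_c$ has adjoint $L_{c^*}$ with respect to this inner product. Hence the left-regular representation $A^0\hookrightarrow \mathrm{End}(A^0)$ (faithful because $1\in A^0$) is a $*$-representation on a finite-dimensional Hilbert space, i.e. it realizes $A^0$ as a $*$-subalgebra of a matrix algebra with its conjugate-transpose involution. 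A finite-dimensional $*$-subalgebra of $\mathrm{End}(V)$ closed under Hermitian adjoint is semisimple: its Jacobson radical $J$ is a two-sided ideal, hence $*$-stable, so if $x\in J$ then $x^*x\in J$ is nilpotent; but $x^*x$ is self-adjoint, hence diagonalizable, so nilpotent forces $x^*x=0$, and positivity of the inner product forces $x=0$. Thus $J=0$ and $A^0$ is semisimple, so by Artin--Wedderburn it is a product of matrix algebras over division algebras; since the base field is $\CC$ and everything is finite-dimensional, the only division algebra is $\CC$ itself, giving $A^0\cong\prod_i M_{n_i}(\CC)$.

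The main obstacle, such as it is, is purely bookkeeping: making sure the adjoint identity $L_c^*=L_{c^*}$ is stated with the correct placement of $\omega$ and $a^*$, using that $\omega$ is central and $\omega^*=\omega$, and that the positivity axiom is being applied in degree $0$. There is no serious analytic input—finite-dimensionality makes the $C^*$-completion issue vacuous, and the whole content is the elementary fact that a finite-dimensional complex $*$-algebra admitting a faithful positive trace (equivalently, a compatible Hilbert space structure) is a product of matrix algebras. One could phrase the conclusion either by directly citing the classification of finite-dimensional $C^*$-algebras, or self-containedly via the radical argument above; I would include the short radical argument so that the lemma does not rely on $C^*$-algebra theory beyond what is elementary.
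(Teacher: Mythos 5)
Your proposal is correct and follows essentially the same route as the paper: use the positive-definite form $\langle a,b\rangle=\tau(\omega a^*b)$ to make $A^0$ a Hilbert space, check that the faithful left-regular representation is a $*$-representation via $(l_c)^*=l_{c^*}$, and conclude that $A^0$ is a $*$-subalgebra of the finite-dimensional $C^*$-algebra $\mathrm{End}(A^0)$, hence a product of matrix algebras. The only difference is that you spell out the standard radical argument for why a finite-dimensional $*$-closed subalgebra of $\mathrm{End}(V)$ is semisimple, whereas the paper simply cites the classification of finite-dimensional $C^*$-algebras.
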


\begin{proof}
By assumption, $A^0$ has an inner product $\langle a,b\rangle=\tau(\omega a^*b)$. 
The (faithful) regular representation $A^0\to\mathrm{End}(A^0)$ by left multiplication of $A^0$ on itself is a $*$-representation since
\begin{equation}
\langle l_ab,c\rangle=\tau(\omega b^*a^*c)=\langle b,l_{a^*}c\rangle
\end{equation}
i.e. $(l_a)^*=l_{a^*}$.
Thus, if $A^0$ is finite--dimensional, then $A^0$ is a $*$-subalgebra of the finite--dimensional $C^*$-algebra $\mathrm{End}(A^0)$.
Any finite--dimensional $C^*$-algebra is a product of matrix algebras.
\end{proof}

The lemma can be used to classify finite--dimensional lozenge algebras.
We have 
\begin{equation}
A^0\cong\mathrm{End}(\mathcal E_1)\oplus\ldots\oplus\mathrm{End}(\mathcal E_n)
\end{equation}
for some finite--dimensional Hermitian vector spaces $\mathcal E_i$.
Any bimodule, in particular $\mathcal A^{0,1}$, is a direct sum of simple bimodules $\mathrm{Hom}(\mathcal E_i,\mathcal E_j)$.
If we associate an arrow $i\to j$ to each such simple summand we get a quiver with vertices $\{1,\ldots,n\}$.
Furthermore, since $A^0$ is semisimple,the derivation $\overline{\partial}:A^0\to A^{0,1}$ must be inner, i.e. of the form $a\mapsto [\alpha,a]$ for some $\alpha\in A^{0,1}$.
The $\mathcal E_i$ together with $\alpha$ are exactly the data of a representation of the quiver.
This shows that in the finite--dimensional case we recover precisely the setup considered in the previous paper \cite{hkkp_semistability} but using a slightly different formalism.
The table below describes the translation between the two formalisms.

\begin{center}
\begin{tabular}{|c|c|}
\hline
Bimodule formalism & Lozenge algebra \\
\hline
\hline
$B$ & $A^0\cong A^2$ via $\omega$ \\
\hline
$\overline{M}\oplus M$ & $A^1=A^{1,0}\oplus A^{0,1}$ \\
\hline
$M\otimes\overline{M}\to B$ & $(a,b)\mapsto -i\Lambda(ab)$ \\
\hline
$\overline{M}\otimes M\to B$ & $(a,b)\mapsto i\Lambda(ab)$ \\
\hline
$\rho$ & $i\theta/\omega$ \\
\hline
$\tau$ & $b\mapsto \tau(\omega b)$ \\
\hline
$b\mapsto [\phi_0,b]$ & $\overline{\partial}$ \\
\hline
\end{tabular}
\end{center}

\subsection{The harmonic part of the algebra}

Let $A$ be a lozenge algebra. 
In particular, the cohomology $H(A)$ is assumed to be finite--dimensional.
This implies that the subspace of harmonic chains
\begin{equation}\label{harmonic_part}
\mathcal H=\mathrm{Ker}(\Delta)\subset A
\end{equation}
is finite--dimensional, and as a consequence there exists an orthogonal projection $P:A\to \mathcal H$. 
The restricted operator 
\begin{equation}
\Delta\mid_{\mathcal H^\perp}:\mathcal H^\perp\to\mathcal H^\perp
\end{equation}
is then injective, but could fail to be surjective.
We assume henceforth that it is surjective, which will be true if $A$ is either finite--dimensional or by harmonic theory in the vector bundle case.
Then there is a unique \textit{Green's operator} $G:A\to A$ with
\begin{equation}\label{greens}
PG=GP=0,\qquad P+\Delta G=P+G\Delta=\mathrm{id}_A.
\end{equation}
Since $\Delta$ commutes with $d$ and $d^*$, the same is also true for $G$.

Under the assumption of existence of a Green's operator the cohomology of $A$ is isomorphic to $\mathcal H$ as a vector space, however $\mathcal H$ is in general not a subalgebra of $A$.

\begin{lem}
Let $A$ be a lozenge algebra with Green's operator, then $\mathcal H=\mathrm{Ker}(\Delta)$ is a lozenge algebra with $d=0$, the same curvature $\theta\in A^2$, $\omega\in A^2$, restricted trace $\tau\mid_{\mathcal H}$, and product $\mathcal H^1\otimes \mathcal H^1\to \mathcal H^2$ the composition
\begin{equation}
\mathcal H^1\otimes \mathcal H^1\longrightarrow A^2 \xrightarrow{P} \mathcal H^2
\end{equation}
of the product on $A$ and the projection to the harmonic subspace.
Moreover, $\mathcal H$ is isomorphic to $H(A)$ and quasi-isomorphic to $A$ (i.e. $A$ is formal).
\end{lem}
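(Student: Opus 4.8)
The plan is to run the classical Hodge-theoretic argument, modelled on the Deligne--Griffiths--Morgan--Sullivan proof of formality of compact K\"ahler manifolds, adapted to the lozenge axioms. First I would set up the Hodge theory: the Green's operator \eqref{greens} gives an orthogonal decomposition $A=\mathcal H\oplus\mathrm{Im}\,d\oplus\mathrm{Im}\,d^*$, and the identity $\Delta=2\Delta_\partial=2\Delta_{\overline\partial}$ \eqref{kaehler_ident2} shows that $\mathcal H=\mathrm{Ker}(\Delta)$ is at the same time the kernel of $\Delta_\partial$, of $\Delta_{\overline\partial}$, and --- setting $d^c:=\ri(\partial-\overline\partial)$, for which one checks $\Delta_{d^c}=\Delta$ --- of $\Delta_{d^c}$; here $\partial^2=\overline\partial^2=0$ and $\partial\overline\partial+\overline\partial\partial=0$ are used, all holding for degree reasons since $A^{\geq 3}=0$, just as in the proof of \eqref{kaehler_ident2}. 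Consequently $\Delta$ preserves the splitting $A^1=A^{1,0}\oplus A^{0,1}$ and commutes with the $*$-operator and with multiplication by $\omega$, so that $\mathcal H$ is $*$-stable, $\mathcal H^1=\mathcal H^{1,0}\oplus\mathcal H^{0,1}$ with $(\mathcal H^{1,0})^*=\mathcal H^{0,1}$, $L$ restricts to an isomorphism $\mathcal H^0\to\mathcal H^2$ with inverse $\Lambda|_{\mathcal H^2}$, and $\omega,\theta\in\mathcal H^2$ --- for $\theta$, note $d\theta=0$ by definition while $d^*\theta=\ri[\Lambda,\overline\partial-\partial]\theta$ vanishes since $\overline\partial\theta,\partial\theta\in A^3=0$ and $d\Lambda\theta=0$ is \eqref{abstract_ym_condition}.

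Next I would check the lozenge axioms for $\mathcal H$ with differential $0$, curvature $\theta$, form $\omega$, trace $\tau|_{\mathcal H}$, and product $a*b:=P(ab)$. Associativity is the only axiom needing an argument: harmonic elements are $d$-closed and $P$ annihilates $\mathrm{Im}\,d$, so the Hodge decomposition gives $ab-P(ab)\in\mathrm{Im}\,d$ for $d$-closed $a,b$, and since $d$ is a derivation and the remaining factor is closed one gets $(a*b)*c=P(abc)=a*(b*c)$. The $*$- and unit axioms and centrality of $\theta$ are inherited, using that $P$ commutes with $*$ (as $*$ preserves $\mathcal H$ and orthogonality) and that $\theta$ is central in $A$; positivity of the three bilinear forms is automatic on subspaces; $(\mathcal H^{1,0})^2=0$ since $(A^{1,0})^2=0$; and the Yang--Mills condition is trivial because $\mathcal H$ has zero differential. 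For the Calabi--Yau structure, $A^3=0$ forces the non-harmonic part of $A^2$ to be $d$-exact, hence $\tau\circ P=\tau$ on $A^2$; this gives $\tau|_{\mathcal H}([a,b]_{\mathcal H})=\tau([a,b]_A)=0$ and matches the pairing on $H^\bullet(\mathcal H)=\mathcal H^\bullet$ with the perfect pairing on $H^\bullet(A)$.

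For the last assertions, sending a harmonic element to its cohomology class is a linear isomorphism $\mathcal H^\bullet\to H^\bullet(A)$, and an algebra isomorphism because $ab$ and $P(ab)$ differ by an exact element; this proves $\mathcal H\cong H(A)$. To conclude that $A$ is formal I would first prove the $\partial\overline\partial$-lemma (for $x$ with $dx=0$ and $x\in\mathrm{Im}\,\partial$ or $x\in\mathrm{Im}\,\overline\partial$, one has $x\in\mathrm{Im}\,\partial\overline\partial$), which follows from \eqref{kaehler_ident2} and the ensuing coincidence, up to scalars, of the Green's operators for $d$, $d^c$, $\partial$, $\overline\partial$, all of which then commute with these differentials and their adjoints --- the argument is standard. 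Given this, the Deligne--Griffiths--Morgan--Sullivan zig-zag
\[
(A,d)\ \hookleftarrow\ (\mathrm{Ker}\,d^c,\,d)\ \twoheadrightarrow\ (\mathrm{Ker}\,d^c/\,\mathrm{Im}\,d^c,\,0)
\]
exhibits two quasi-isomorphisms of (curved) DG-algebras, with $\theta,\omega$ carried along (both lying in $A^2\subset\mathrm{Ker}\,d^c$), and the right-hand term is identified with $\mathcal H$ by harmonic representatives since $\mathcal H=\mathrm{Ker}(\Delta_{d^c})$ from the first step; hence $A\simeq\mathcal H\cong H(A)$.

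The main obstacle is this last step: the verification of the lozenge axioms and $\mathcal H\cong H(A)$ is essentially bookkeeping, but formality genuinely requires the K\"ahler-type structure (a general curved DG-algebra concentrated in degrees $0,1,2$ need not be formal), and the $\partial\overline\partial$-lemma together with the DGMS reduction is precisely where that structure enters. A secondary point to watch throughout is that the manipulations used ($\Delta$ preserving the $(1,0)/(0,1)$ splitting, $\partial^2=\overline\partial^2=0$, etc.) rely on the degree bound $A^{\geq 3}=0$ rather than on an honest bigrading of $A^2$, so one should confirm --- as in the proof of \eqref{kaehler_ident2} --- that each still goes through.
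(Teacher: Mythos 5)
Your proposal is correct and follows essentially the same route as the paper: it uses the K\"ahler identities \eqref{kaehler_ident1}--\eqref{kaehler_ident2} to identify $\mathcal H$ with the kernels of $\Delta_\partial$, $\Delta_{\overline\partial}$ and $\Delta_{d^c}$, verifies the lozenge axioms on harmonic elements, and deduces formality from the $dd^c$-lemma via the Deligne--Griffiths--Morgan--Sullivan zig-zag $(A,d)\hookleftarrow(\mathrm{Ker}\,d^c,d)\to(H(A),0)$. Your write-up is somewhat more explicit than the paper's (e.g.\ the associativity of $P(ab)$ and the identity $\tau\circ P=\tau$ on $A^2$), but no step differs in substance.
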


\begin{proof}
Taking into account that $A$ is concentrated in degrees $0,1,2$ and the K\"ahler identities \eqref{kaehler_ident1}, \eqref{kaehler_ident2} we see that harmonicity can be characterized by
\begin{gather}
a\in A^0: \Delta a=0 \Leftrightarrow \partial a=0 \Leftrightarrow \overline{\partial}a=0 \\
a\in A^{1,0}: \Delta a=0 \Leftrightarrow \partial^*a=0 \Leftrightarrow \overline{\partial}a=0 \\
a\in A^{0,1}: \Delta a=0 \Leftrightarrow \partial a=0 \Leftrightarrow \overline{\partial}^*a=0 \\
a\in A^2: \Delta a=0 \Leftrightarrow \partial^*a=0 \Leftrightarrow \overline{\partial}^*a=0
\end{gather}
This shows that $\mathcal H^0\subset A^0$ is a subalgebra, $\mathcal H^1\subset A^1$ is a sub-bimodule over $\mathcal H^0$, and $\mathcal H^2\subset A^2$ is also a sub-bimodule over $\mathcal H^0$ since if $a\in\mathcal H^0$, $b\in\mathcal H^2$ then
\begin{equation}
\partial^*(ab)=-i\overline{\partial}\Lambda(ab)=-i\overline{\partial}a\Lambda(b)=-ia\overline{\partial}\Lambda(b)=a\partial^*b=0
\end{equation}
thus $ab\in\mathcal H^2$.
Furthermore $L$ and $\Lambda$ restrict to inverse isomorphisms between $\mathcal H^0$ and $\mathcal H^2$ since $d^*\omega=0$.
The YM condition \eqref{abstract_ym_condition} is precisely that $\theta\in \mathcal H^2$.

One can show directly that $\mathcal H$ is an algebra or deduce this from the isomorphism with $H(A)$.
The point is that there are quasi--isomorphisms of DG-algebras
\begin{equation}
(A,d)\hookleftarrow (\mathrm{Ker}(d^c),d) \rightarrow (H(A),0)
\end{equation}
where $d^c=i(\overline{\partial}-\partial)$.
This follows from the $dd^c$-lemma as in Deligne--Griffiths--Morgan--Sullivan~\cite{dgms}.
\end{proof}

\subsection{Gauge group action and flow}

By \textit{gauge group} we mean here the group $\mathcal G$ of invertible elements in $A^0$.
If $A^0$ is finite--dimensional, then $\mathcal G$ is isomorphic to a product of general linear groups, and if $A$ comes from a vector bundle $E$ then $\mathcal G$ is the group of automorphism of $E$ as a complex vector bundle.

If $\alpha''\in A^{0,1}$ and $g\in\mathcal G$ then define $g\cdot\alpha''$ by gauge transformations
\begin{equation}
g\cdot\alpha''=g\alpha''g^{-1}-\overline{\partial}gg^{-1}.
\end{equation}
We extend this action to $A^1$ so that if $\alpha^*=-\alpha$ then $(g\cdot\alpha)^*=-g\cdot\alpha$. 
Explicitly, we get the formula
\begin{equation}
g\cdot \alpha:=g^{*-1}\alpha'g^*+g^{*-1}\partial g^{*} + g\alpha''g^{-1}-\overline{\partial} gg^{-1}
\end{equation}
for $\alpha=\alpha'+\alpha''\in A^{1,0}\oplus A^{0,1}$ and $g\in \mathcal G$.

Given $\alpha\in A^1$ the associated curvature is
\begin{equation}
F:=\theta+d\alpha+\alpha^2
\end{equation}
and under the condition $\alpha^*=-\alpha$ we have $F^*=-F$.
Given a fixed $\alpha\in A^1$ we can consider at least formally the flow 
\begin{gather}
\dot{g}g^{-1}=-i(\Lambda F-\lambda) \\
F=\theta+d(g\cdot\alpha)+(g\cdot\alpha)^2
\end{gather}
where $\dot{g}=dg/dt$ and $\lambda$ is chosen so that $\tau(\omega(\Lambda F-\lambda))=0$, i.e. $\lambda:=\tau(\theta)/\tau(\omega)$.
If $\alpha^*=-\alpha$ then the right hand side is $*$-invariant and we replace the left hand side by the Hermitian part:
\begin{equation}\label{flow_abstract_gauge}
\frac{1}{2}\left(\dot{g}g^{-1}+(\dot{g}g^{-1})^*\right)=-i(\Lambda F-\lambda)
\end{equation}
This allows multiplying $g$ by any unitary elements.

In terms of $h=g^*g$ the curvature is
\begin{gather}
g^{-1}\left(\theta+d(g\cdot\alpha)+(g\cdot\alpha)^2\right)g=\theta +d\left(A_{\alpha,h}\right)+\left(A_{\alpha,h}\right)^2 \\
A_{\alpha,h}:=\alpha''+h^{-1}\alpha'h+h^{-1}\partial h
\end{gather}
so the flow \eqref{flow_abstract_gauge} becomes
\begin{equation}\label{flow_abstract_metric}
h^{-1}\dot{h}=-2i\left(\Lambda\left(\theta+d\left(A_{\alpha,h}\right)+\left(A_{\alpha,h}\right)^2\right)-\lambda\right).
\end{equation}
If $A$ is finite--dimensional then this is the flow considered in \cite{hkkp_semistability} up to a factor of 2.
Note that in this case there is a partial order on self-adjoint elements of $A^0$ such that positive elements are those with non-negative spectrum.
For the rest of this subsection assume that $A$ is finite--dimensional unless otherwise stated. 
The following results are established in \cite{hkkp_semistability}. 
In Section~\ref{sec_hym} we will prove versions of these results in the Hermitian vector bundle case.

\begin{prop}[Monotonicity]
\label{prop_mono_quiver}
Let $g_t,h_t$ be solutions of \eqref{flow_abstract_metric} for $t\geq 0$ with $g_0\leq h_0$, then $g_t\leq h_t$ for all $t\geq 0$.
\end{prop}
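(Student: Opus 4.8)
The plan is to realize the monotonicity as a comparison principle for a parabolic equation satisfied by the difference (or ratio) of the two solutions, and to reduce the bundle/finite-dimensional dichotomy to a single maximum-principle argument. Write $h_t$, $g_t$ for the two solutions of \eqref{flow_abstract_metric} and set $k_t:=g_t^{-1/2}h_tg_t^{-1/2}$ (a positive self-adjoint element of $A^0$, or section of $\mathrm{End}(E)$ in the bundle case); the hypothesis $g_0\leq h_0$ is equivalent to $k_0\geq 1$, and the conclusion $g_t\leq h_t$ is equivalent to $k_t\geq 1$ for all $t\geq0$. First I would compute the evolution equation for $h_t^{-1}g_t$ or for $k_t$ by subtracting the two copies of \eqref{flow_abstract_metric}: since the right-hand side depends on $h$ through the connection $A_{\alpha,h}=\alpha''+h^{-1}\alpha'h+h^{-1}\partial h$ and then through $\Lambda$ of its curvature $\theta+d(A_{\alpha,h})+(A_{\alpha,h})^2$, the difference of curvatures, when conjugated appropriately, should take the form of a second-order elliptic operator (a Laplacian-type term $\Delta_{A}$ coming from the $\partial h^{-1}\partial h$ piece) applied to $\log k_t$, plus lower-order terms that vanish to first order where $k_t=1$. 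Concretely I expect an equation of the schematic shape
\begin{equation}
\frac{d}{dt}k_t = -(\text{nonnegative elliptic operator})k_t + (\text{zeroth-order term})\cdot(k_t-1)
\end{equation}
after a suitable gauge/similarity transformation, where the elliptic operator is built from the connection $A_{\alpha,h_t}$ and hence has no zeroth-order term.

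The key steps, in order, are: (i) derive this evolution equation, carefully tracking how $A_{\alpha,h}-A_{\alpha,g}=h^{-1}\alpha'h-g^{-1}\alpha'g+h^{-1}\partial h-g^{-1}\partial g$ contributes, and observing that the contributions are smooth functions of $k$ vanishing when $k=1$; (ii) in the finite-dimensional case, apply the elementary fact that the smallest eigenvalue $\mu(t)$ of $k_t$ satisfies a differential inequality $\dot\mu\geq -C\,\mu\cdot(\text{something})$ at points where it is the minimum eigenvalue—using that the nonnegative elliptic operator, which in finite dimensions is an honest nonnegative operator on $A^0$, does not decrease the minimum of the spectrum once one quotients out its positivity—to conclude $\mu(t)\geq1$ by Gr\"onwall; (iii) in the bundle case, replace the eigenvalue argument by the parabolic maximum principle applied to the scalar function $(x,t)\mapsto\lambda_{\min}(k_t(x))$, or more robustly to $\mathrm{tr}(k_t^{-N})$ for large $N$ and then $N\to\infty$, using that $\Delta_{A_{\alpha,h_t}}$ is a genuine Laplace-type operator so the maximum principle applies and the zeroth-order term has the right sign at a first touching point where $k=1$.

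The main obstacle I anticipate is step (i): producing the evolution equation in a form where the second-order part is manifestly a nonnegative (Laplace-type) operator with no zeroth-order term, and where every remaining term is proportional to $k_t-1$ (equivalently to $\log k_t$) so that the comparison works at a first contact point. This requires the right choice of conjugating element—most likely one should differentiate $g_t^{-1}h_t$ rather than $k_t$ and then symmetrize, or work with $\log(g_t^{-1}h_t)$ directly as in the Donaldson-type arguments for the Hermitian--Yang--Mills flow—and the noncommutativity of $A^0$ means one must be careful that cross terms like $[h^{-1}\partial h, h^{-1}\alpha' h]$ and the $\alpha'$-dependence really do organize into $\Delta_{A_{\alpha,h}}$ plus $O(k-1)$. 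A secondary subtlety in the bundle case is justifying the maximum principle for the matrix-valued (or eigenvalue) quantity on the noncompact-looking fiber direction, which is handled by passing to the scalar invariants $\mathrm{tr}(k^{-N})$; and one must check the solutions exist and stay regular on $[0,\infty)$, which may be quoted from \cite{hkkp_semistability} in the finite-dimensional case and from standard parabolic theory in the bundle case.
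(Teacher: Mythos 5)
The paper itself does not prove Proposition~\ref{prop_mono_quiver} --- it is quoted from \cite{hkkp_semistability} --- but it does prove the bundle analogue, Proposition~\ref{prop_mono_hym}, and comparing your plan with that argument exposes a genuine gap at the core of your step~(i). You assert that after conjugating, every term in the evolution of $k_t$ other than a nonnegative elliptic operator is ``proportional to $k_t-1$,'' so that everything vanishes or is harmless at a first contact point. That structural claim is false, and it is not the mechanism that makes the comparison work. In the paper's calculation \eqref{mono_main_calc}, the crucial surviving term at a contact point is $-2i\bigl(\Lambda(\overline{\partial}k_0\,k_0^{-1}\partial k_0)\bigr)(v,v)$: it is quadratic in \emph{first derivatives} of $k$ and does \emph{not} vanish where $k=1$ (nor is it controlled by $k-1$). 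What saves the argument is its \emph{sign}, coming from the positivity $-i\Lambda(\overline{\varphi}\wedge\varphi)\geq 0$ for $(1,0)$-forms $\varphi$ --- in the lozenge-algebra setting this is exactly the positive-definiteness axiom on $\overline{A^{1,0}}\otimes A^{1,0}\to\CC$. The same positivity is what controls the $\alpha'$-dependent cross terms you worry about. Since you never identify this positivity, your ``main obstacle'' is not merely a computation to be carried out: the equation you hope to derive does not have the form you need, and the argument must instead run through a sign analysis of the non-vanishing first-order terms.

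Two further points. First, your Gr\"onwall step is insufficient as stated: from $\dot{\mu}\geq -C\mu$ you get $\mu(t)\geq \mu(0)e^{-Ct}$, which does not yield $\mu(t)\geq 1$. To close the argument you need either the differential inequality to degenerate at $\mu=1$ (which is the unproven structural claim above) or a perturbation device: the paper replaces $g_t$ by $k_t:=e^{-\varepsilon t}g_t$, which adds $+\varepsilon$ to the derivative of the difference at a contact point, making it strictly positive; one then concludes by an open--closed argument and lets $\varepsilon\to 0$. Without some such strictness, ``the inequality holds at the first touching time'' does not propagate. Second, differentiating $g_t^{-1/2}$ in a noncommutative algebra is itself awkward (the derivative is given by an integral formula, not a chain rule); the paper avoids all conjugation by simply evaluating the quadratic forms $h_t(v,v)-k_t(v,v)$ on unit vectors $v$ and running a scalar minimum-principle argument on the (compact) sphere bundle, which is both more elementary and sidesteps your matrix-valued maximum principle and the $\mathrm{tr}(k^{-N})$ regularization entirely.
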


\begin{coro}[Uniqueness of asymptotics]
Let $g_t,h_t$ be solutions of \eqref{flow_abstract_metric} for $t\geq 0$. 
Then there is a constant $C\geq 1$ such that
\begin{equation}
\frac{1}{C}g_t\leq h_t\leq Cg_t
\end{equation}
for $t\geq 0$.
\end{coro}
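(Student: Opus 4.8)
The plan is to deduce the corollary from the monotonicity Proposition~\ref{prop_mono_quiver} by a standard comparison argument. The key point is that the flow \eqref{flow_abstract_metric} commutes with constant rescalings of the metric: if $h_t$ is a solution, then so is $C h_t$ for any scalar $C>0$, because the right-hand side $-2i(\Lambda(\theta+d(A_{\alpha,h})+A_{\alpha,h}^2)-\lambda)$ is unchanged when $h$ is multiplied by a positive scalar. Indeed $A_{\alpha,h}=\alpha''+h^{-1}\alpha' h+h^{-1}\partial h$ is invariant under $h\mapsto Ch$ since the scalar cancels in $h^{-1}\alpha' h$, kills the term $h^{-1}\partial h$ contribution from the constant (as $\partial C=0$), and $h^{-1}\dot h$ is likewise scale-invariant. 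So the curvature $F$ and hence the whole evolution equation is invariant, and $Ch_t$ solves \eqref{flow_abstract_metric} with initial condition $Ch_0$.

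Given two solutions $g_t,h_t$ for $t\ge 0$, since $g_0$ and $h_0$ are positive self-adjoint elements of the finite-dimensional algebra $A^0$, we may choose a constant $C\ge 1$ large enough that $\tfrac{1}{C}g_0\le h_0\le C g_0$; such a $C$ exists because the set of $C$ for which this holds is determined by the spectra of $g_0^{-1/2}h_0 g_0^{-1/2}$ and its inverse, which are bounded. Now $\tfrac{1}{C}g_t$ and $C g_t$ are again solutions of \eqref{flow_abstract_metric}, by the scaling invariance above, with initial data $\tfrac{1}{C}g_0$ and $Cg_0$ respectively. Applying Proposition~\ref{prop_mono_quiver} to the pair $(\tfrac{1}{C}g_t, h_t)$, whose initial conditions satisfy $\tfrac{1}{C}g_0\le h_0$, gives $\tfrac{1}{C}g_t\le h_t$ for all $t\ge 0$; applying it to the pair $(h_t, C g_t)$, whose initial conditions satisfy $h_0\le Cg_0$, gives $h_t\le C g_t$ for all $t\ge 0$. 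Combining these yields $\tfrac{1}{C}g_t\le h_t\le C g_t$ for all $t\ge 0$, as claimed.

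The only real content beyond invoking monotonicity is the scaling invariance of the flow, which is a direct inspection of the formula for $A_{\alpha,h}$ and \eqref{flow_abstract_metric}; I expect no obstacle there. One should double-check that the partial order on self-adjoint elements behaves well under the scaling $h\mapsto Ch$ (it does, since $C>0$ and conjugation-invariance of positivity is immediate) and that Proposition~\ref{prop_mono_quiver} applies verbatim to any pair of solutions regardless of which is which — it does, by symmetry of the hypothesis. So the argument is short; the substance has already been done in establishing monotonicity.
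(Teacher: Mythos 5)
Your proposal is correct and follows the same route as the paper: pick $C\geq 1$ with $C^{-1}g_0\leq h_0\leq Cg_0$ (possible by finite-dimensionality/positivity of the initial data) and apply the monotonicity proposition to the rescaled solutions. The paper leaves the scaling invariance of the flow implicit (it is the constant case of the homogeneity remark made for the HYM flow), so your explicit check that $A_{\alpha,Ch}=A_{\alpha,h}$ and $(Ch)^{-1}\partial_t(Ch)=h^{-1}\dot h$ is a welcome but not substantively different addition.
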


We call $h$ an \textbf{asymptotic solution} of \eqref{flow_abstract_metric} if for some (hence any) exact solution $k$ there is a constant $C\geq 1$ such that
\begin{equation}
\frac{1}{C}k_t\leq h_t\leq Ck_t
\end{equation}
for all $t\gg 0$.

\begin{prop}[Criterion for asymptotic solutions]
\label{prop_asym_criterion}
Suppose $g$ satisfies \eqref{flow_abstract_gauge} up to an error term $s$, i.e.
\begin{equation}
\frac{1}{2}\left(\dot{g}g^{-1}+(\dot{g}g^{-1})^*\right)=-i\left(\Lambda F-\lambda\right)+s
\end{equation}
where $s$ is a smooth $t$-dependent self-adjoint element of $A^0$, and furthermore
\begin{equation}
-f\leq s\leq f
\end{equation}
for some smooth $L^1$ function $f:[0,\infty)\to[0,\infty)$.
Then $h:=g^*g$ is an asymptotic solution of \eqref{flow_abstract_metric}.
\end{prop}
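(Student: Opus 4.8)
We reduce the statement to the comparison principle underlying Proposition~\ref{prop_mono_quiver} by absorbing the error $s$ into a time-dependent \emph{scalar} rescaling of $g$. Since $f$ is $L^1$, the function $\Psi(t):=\int_0^t f(\tau)\,d\tau$ is smooth, nondecreasing, and bounded: $0\le\Psi(t)\le C_0:=\int_0^\infty f<\infty$, with $\dot\Psi=f$. The scalar $e^{\pm\Psi(t)}\cdot 1\in A^0$ is central and fixed by $*$ (because $\Psi$ is real, $1^*=1$, and $\overline{\partial}1=\partial 1=0$), so replacing $g$ by $g_\pm:=e^{\pm\Psi(t)}g$ leaves the gauge--transformed connection $g\cdot\alpha$, hence the curvature $F$, unchanged, while
\begin{equation*}
\tfrac12\left(\dot g_\pm g_\pm^{-1}+(\dot g_\pm g_\pm^{-1})^*\right)=-i(\Lambda F-\lambda)+\bigl(s\pm f\bigr).
\end{equation*}
By hypothesis $s+f\ge 0$ and $s-f\le 0$, so $g_+$ is a supersolution and $g_-$ a subsolution of \eqref{flow_abstract_gauge}; equivalently the metrics $h_\pm:=g_\pm^*g_\pm=e^{\pm 2\Psi(t)}h$ are a super- and a subsolution of the metric flow \eqref{flow_abstract_metric}, agreeing with $h$ at $t=0$.

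Now let $k$ be the exact solution of \eqref{flow_abstract_metric} with $k_0=h_0$. Since $h_-(0)=h_0=k_0=h_+(0)$, the comparison principle for \eqref{flow_abstract_metric} --- the parabolic maximum-principle argument that proves Proposition~\ref{prop_mono_quiver}, applied to a sub/supersolution against an exact solution --- gives $h_-(t)\le k_t\le h_+(t)$ for all $t\ge 0$, i.e.
\begin{equation*}
e^{-2\Psi(t)}h_t\le k_t\le e^{2\Psi(t)}h_t.
\end{equation*}
Multiplying through by $e^{\pm 2\Psi(t)}$ and using $0\le\Psi\le C_0$ yields $\tfrac1C k_t\le h_t\le C k_t$ for all $t\ge0$ with $C:=e^{2C_0}$. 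In particular this holds for all $t\gg 0$; and by the Corollary on uniqueness of asymptotics, comparison with one exact solution implies (after enlarging $C$) comparison with any exact solution, so $h$ is an asymptotic solution of \eqref{flow_abstract_metric}.

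The only delicate point --- and the one I expect to be the main obstacle --- is the comparison step: Proposition~\ref{prop_mono_quiver} is stated only for pairs of exact solutions, so one must check that the maximum-principle argument proving it also controls a sub/supersolution against an exact solution, with the relevant notion of sub/supersolution being precisely the one produced above (equation \eqref{flow_abstract_gauge}, or \eqref{flow_abstract_metric}, perturbed by the self-adjoint term $s\pm f$ of a definite sign). This is the content that is, in effect, established in \cite{hkkp_semistability}. The remaining ingredients are routine: the algebraic fact that $e^{\pm\Psi}\cdot1$ is central and $*$-invariant, so that $g_\pm\cdot\alpha=g\cdot\alpha$ and $h_\pm=e^{\pm2\Psi}h$, and the bookkeeping of these scalars through the passage from \eqref{flow_abstract_gauge} to \eqref{flow_abstract_metric}.
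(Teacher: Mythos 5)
Your proof is correct and follows essentially the same route as the paper: the paper defers this finite-dimensional statement to \cite{hkkp_semistability}, but its proof of the bundle analogue (Proposition~\ref{prop_asym_criterion_hym}) uses exactly your strategy, except that it rescales the exact solution $k$ by $\exp\left(\pm\int_0^t f\right)$ to obtain approximate solutions with error $\pm f$ and compares $h$ against those, whereas you rescale $h$ itself into a sub/supersolution pair and compare against $k$ --- a mirror image of the same argument. In both versions the crux is the point you correctly isolate: the maximum-principle computation behind Proposition~\ref{prop_mono_quiver} still closes when the two families being compared satisfy the flow up to error terms whose difference has a definite sign.
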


King's theorem~\cite{king94} relates existence of a fixed point of the flow to slope stability.
Let $A$ be a lozenge algebra, not necessarily finite--dimensional.
By assumption, $\mathrm{Ker}(d:A^0\to A^1)$ is finite--dimensional, thus a direct sum of matrix algebras $\mathrm{End}(\mathcal E_i)$ for some finite--dimensional Hermitian vector spaces $\mathcal E_i$ (proof as for Lemma~\ref{lem_lozenge_semisimple}).
The partially ordered set of collections of subspaces $V_i\subset\mathcal E_i$ for each $i$ is a modular lattice which can be equivalently described in terms of orthogonal projectors as
\begin{equation}
\mathcal M(A):=\left\{p\in A^0\mid p^2=p, p^*=p, dp=0\right\}.
\end{equation}
The partial order by inclusion translates to $p\leq q\Leftrightarrow qp=p\Leftrightarrow pq=p$.
The central elements $\theta,\omega\in A^2$ together give a polarization 
\begin{equation}
Z([p,q]):=\tau\left((\omega-\theta)(q-p)\right)
\end{equation}
of the lattice.
Note that real part of $Z$ depends on $\omega$ and the imaginary part on $\theta$.

Given $\alpha''\in A^{0,1}$ we may also consider the sublattice
\begin{equation}
\mathcal M(A,\alpha''):=\left\{p\in\mathcal M(A,\alpha)\mid (1-p)\alpha'' p=0\right\}
\end{equation}
of projectors compatible with $\alpha''$.
In contrast to $\mathcal M(A)$, this lattice need not be complemented.

\begin{df}
A polarized lattice $(L,Z)$ is \textbf{polystable} of phase $\phi\in \RR$ if $\phi(L)=\phi$, $\phi([0,x])\leq\phi(L)$ for any $x\neq 0$, and if $\phi([0,x])=\phi(L)$ then $x$ has a complement in $L$.
Equivalently, $(L,Z)$ is semistable and $L^\phi$ is complemented in the notation of \eqref{sublattice_0}.
\end{df}

\begin{thm}[King]
\label{thm_king}
Let $A$ be a finite--dimensional lozenge algebra, $\alpha=\alpha'+\alpha''\in A^{1,0}\oplus A^{0,1}$ with $\alpha^*=-\alpha$, then there exists a $g\in\mathcal G$ such that 
\begin{equation}
\Lambda\left(\theta+d(g\cdot\alpha)+(g\cdot\alpha)^2\right)=\lambda
\end{equation}
i.e. a fixed point of the flow \eqref{flow_abstract_gauge}, if and only if $\mathcal M(A,\alpha'')$ is polystable of phase $\phi=\arctan(i\lambda)$.
\end{thm}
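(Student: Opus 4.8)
\textbf{Proof proposal for Theorem~\ref{thm_king} (King's theorem in the lozenge setting).}

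The plan is to reduce the statement to the classical result of King~\cite{king94} on moment maps for quiver representations, exploiting the classification of finite–dimensional lozenge algebras established above. By Lemma~\ref{lem_lozenge_semisimple} and the discussion following it, $A^0\cong\bigoplus_i\mathrm{End}(\mathcal E_i)$ for Hermitian vector spaces $\mathcal E_i$, the bimodule $A^{0,1}$ decomposes into simple summands $\mathrm{Hom}(\mathcal E_i,\mathcal E_j)$ indexed by the arrows of a quiver $Q$, and $\overline{\partial}$ being inner means there is $\phi_0\in A^{0,1}$ with $\overline{\partial}a=[\phi_0,a]$. Writing $\alpha''=\phi_0+\psi$ for $\psi\in A^{0,1}$, the deformed differential $\widetilde{\overline{\partial}}a=\overline{\partial}a+[\alpha'',a]=[\phi_0+\alpha'',a]$ is again inner, so the pair $(A,\alpha)$ (with $\alpha'=-(\alpha'')^*$ forced by $\alpha^*=-\alpha$ and $A^{1,0}A^{1,0}=0$) corresponds precisely to a representation $\mathbf{V}$ of $Q$ (with relations) on the spaces $\mathcal E_i$, matching the dictionary in the table. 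First I would make this correspondence precise: spell out how the element $A_{\alpha,h}=\alpha''+h^{-1}\alpha'h+h^{-1}\partial h$ and its curvature $\Lambda(\theta+dA_{\alpha,h}+A_{\alpha,h}^2)$ translate, under the table's identifications $B\leftrightarrow A^0$, $\rho\leftrightarrow i\theta/\omega$, $\tau\leftrightarrow(b\mapsto\tau(\omega b))$, into the moment-map expression for the $\prod_i U(\mathcal E_i)$-action on the representation variety, so that the fixed-point equation $\Lambda F=\lambda$ becomes exactly the equation $\mu(g\cdot\mathbf{V})=\lambda\cdot\mathrm{id}$ studied by King.

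Next I would identify the two stability notions. King's theorem asserts solvability of the moment-map equation iff the representation is polystable for the stability parameter determined by $\lambda$; here "subrepresentation" means a choice of $\widetilde{\overline{\partial}}$-invariant subspaces $V_i\subset\mathcal E_i$. I claim this is literally the lattice $\mathcal M(A,\alpha'')$: a projector $p\in A^0$ with $p^2=p$, $p^*=p$ satisfies $dp=0$ (after the deformation by $\alpha$, i.e. $\widetilde{\overline{\partial}}p=0$) and $(1-p)\alpha''p=0$ exactly when its image is a subrepresentation of $\mathbf{V}$. Under the polarization $Z([0,p])=\tau((\omega-\theta)p)$, the real part $\tau(\omega p)=\sum_i\dim V_i$ (up to normalization, using $L:\mathcal H^0\xrightarrow{\sim}\mathcal H^2$) is the total dimension vector and the imaginary part $-\tau(\theta p)$ encodes the slope datum $\rho\leftrightarrow i\theta/\omega$, so $\phi([0,p])\leq\phi(L)$ with equality forcing a complement is precisely King's $\theta$-polystability with $\arctan$ of the phase matching $\arctan(i\lambda)$ by the choice $\lambda=\tau(\theta)/\tau(\omega)$. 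I would also check the degenerate factor $A^{1,0}A^{1,0}=0$ is harmless: it only says the quiver relations are among those King allows (it does not constrain the ambient representation variety on which the $U(\mathcal E_i)$ act), and the curvature contribution from $\theta$ plays the role of the prescribed central value.

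The main obstacle is the translation in the first paragraph: verifying that the abstract curvature $\Lambda(\theta+dA_{\alpha,h}+A_{\alpha,h}^2)$ genuinely equals the quiver moment map evaluated at the gauge-transformed representation, including getting all signs and the factors of $i$ right (the $-i$ versus $i$ asymmetry between the pairings on $A^{1,0}$ and $A^{0,1}$, and the normalization in $Z$). Once that identity is in place and the lattice $\mathcal M(A,\alpha'')$ is identified with the poset of subrepresentations with the matching polarization, the theorem follows by directly quoting~\cite{king94}; no new analysis is needed since $A$ is finite–dimensional. A secondary point to be careful about is that King works with a fixed stability parameter whereas here $\lambda$ is determined by $\theta$ and $\omega$; I would note that the substitution $\phi=\arctan(i\lambda)$ and the normalization $\tau(\omega(\Lambda F-\lambda))=0$ are exactly what makes the trace-free moment-map equation and the phase-matching condition on the destabilizing sublattice coincide, so the equivalence is an equality of conditions rather than merely an implication in one direction.
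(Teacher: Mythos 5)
Your proposal is correct and follows exactly the route the paper intends: the paper gives no argument for this theorem beyond labelling it ``[King]'' and citing \cite{king94}, relying on the dictionary (Lemma~\ref{lem_lozenge_semisimple} and the translation table) that identifies a finite--dimensional lozenge algebra with $\alpha''$ with a quiver representation, its fixed-point equation with King's moment-map equation, and $\mathcal M(A,\alpha'')$ with the subrepresentation lattice carrying the matching polarization. Your plan fills in precisely those translation steps, so it is consistent with (indeed more detailed than) what the paper provides.
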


\subsection{Constructing asymptotic solutions}

In this subsection we construct asymptotic solutions of \eqref{flow_abstract_gauge} by relating the theory of weight filtrations in modular lattices as outlines in Section~\ref{sec_weightfilt} with the properties of lozenge algebras.

If $\mathcal M(A,\alpha'')$ is semistable of phase $0$ for some $\alpha''\in A^{0,1}$, meaning that $\tau(\theta)=0$ and $\tau(i\theta p)\leq 0$ for any $p\in \mathcal M(A,\alpha'')$, then $Z$ restricts to an $\RR$-valued polarization on the sublattice $\mathcal M(A,\alpha'')^0$ of semistables of phase $0$ (see \eqref{sublattice_0}).

\begin{df}
Let $A$ be a lozenge algebra with $\mathcal M(A,\alpha'')$ semistable.
Theorem~\ref{balanced_chain_thm} provides a weight filtration of $\mathcal M(A,\alpha'')^0$ of the form $0=p_0<p_1<\ldots<p_n=1$ labeled by real numbers $\lambda_1<\ldots<\lambda_n$.
Define projectors $p_\lambda\in\mathcal M(A)$ by
\begin{equation}
p_\lambda:=\begin{cases} p_{k}-p_{k-1} & \text{ if }\lambda=\lambda_k \\ 0 & \text{ else } \end{cases}
\end{equation}
for $\lambda\in\RR$.
The \textbf{weight grading} $r\in A^0$ is given by
\begin{equation}
r:=\sum_{\lambda\in\RR}\lambda p_\lambda
\end{equation}
which is harmonic ($dr=0$) and self-adjoint ($r^*=r$) by construction.
\end{df}

The degree $\lambda$ part of $\alpha''$ is
\begin{equation}
\alpha_\lambda'':=\sum_{\mu\in\RR}p_{\mu+\lambda}\alpha''p_\mu.
\end{equation}

\begin{lem}
$\alpha_\lambda''=0$ for $\lambda>0$, i.e. $\alpha''$ is upper--triangular with respect to the $\RR$--grading
\end{lem}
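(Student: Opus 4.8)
The plan is to exploit the fact that, by its very construction, every projector $p_k$ occurring in the weight filtration of $\mathcal M(A,\alpha'')^0$ already lies in $\mathcal M(A,\alpha'')$, and hence satisfies the compatibility relation $(1-p_k)\alpha''p_k=0$. Writing $q_k:=p_k-p_{k-1}=p_{\lambda_k}$ for the graded projectors, which are mutually orthogonal and sum to $1$, the element $\alpha''$ decomposes as $\alpha''=\sum_{i,j}q_j\alpha''q_i$, and by definition $\alpha''_\lambda$ is the sum of those blocks $q_j\alpha''q_i$ for which $\lambda_j-\lambda_i=\lambda$ (this is exactly the weight-$\lambda$ part for the grading $\mathrm{ad}(r)$). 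So it suffices to show that $q_j\alpha''q_i=0$ whenever $\lambda_j>\lambda_i$, i.e. whenever $j>i$.

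To carry this out I would first record the elementary projector identities that follow from $p_0<p_1<\cdots<p_n$ in the finite--dimensional $C^*$-algebra $\mathrm{Ker}(d:A^0\to A^1)$: for $j>i$ one has $p_{j-1}\geq p_i$, whence $q_jp_i=p_jp_i-p_{j-1}p_i=p_i-p_i=0$, and also $q_i=p_iq_i$. The defining relation $(1-p_i)\alpha''p_i=0$ can be rewritten as $\alpha''p_i=p_i\alpha''p_i$. Combining these, for $j>i$,
\[
q_j\alpha''q_i=q_j\alpha''p_iq_i=q_j\,(p_i\alpha''p_i)\,q_i=(q_jp_i)\,\alpha''p_iq_i=0.
\]
Summing over all pairs with $\lambda_j-\lambda_i=\lambda>0$ then gives $\alpha''_\lambda=0$, which is the assertion.

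There is no genuinely hard step here; the real content of the lemma is the observation that the weight filtration of the underlying lattice $\mathcal M(A,\alpha'')^0$ is automatically a filtration by $\alpha''$-compatible projectors, which is precisely what the definition of $\mathcal M(A,\alpha'')$ builds in. The only points that require care are purely bookkeeping: that the chain $\{p_k\}$ really sits inside $\mathcal M(A,\alpha'')$ (immediate, since $\mathcal M(A,\alpha'')^0\subseteq\mathcal M(A,\alpha'')$), that $\sum_k q_k=p_n-p_0=1$ so that the block decomposition of $\alpha''$ is complete, and that the block $q_j\alpha''q_i$ carries $\mathrm{ad}(r)$-weight $\lambda_j-\lambda_i$. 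Conceptually this is the first step of the iterative construction of asymptotic solutions: it exhibits $\alpha''$ as upper--triangular (having only non-positive $\mathrm{ad}(r)$-weights) with respect to the weight grading, so that the leading-order dynamics of the flow is controlled by the block-diagonal part $\alpha''_0$.
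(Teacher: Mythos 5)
Your proof is correct and is essentially the paper's argument: both reduce the claim to the vanishing of the blocks $q_j\alpha''q_i$ for $j>i$ and derive it from the compatibility relation $(1-p_i)\alpha''p_i=0$, which holds because the weight filtration lives in $\mathcal M(A,\alpha'')^0\subseteq\mathcal M(A,\alpha'')$. The only cosmetic difference is that you absorb the projector $(1-p_i)$ on the right of $q_j$ via the identity $q_jp_i=0$, whereas the paper inserts it via $q_j\leq 1-p_i$; these are the same computation.
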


\begin{proof}
We need to check that $p_{\mu+\lambda}\alpha''p_\mu=0$ for $\lambda=0$. 
Assume $p_{\mu+\lambda}\neq 0$ and $p_\mu\neq 0$, then $p_\mu=p_k-p_{k-1}$ and $p_{\mu+\lambda}=p_m-p_{m-1}$ for some $k<m$.
We have $p_k-p_{k-1}\leq p_k$ and $p_m-p_{m-1}\leq 1-p_k$ thus
\begin{equation}
p_{\mu+\lambda}\alpha''p_\mu=(p_m-p_{m-1})(1-p_k)\alpha''p_k(p_k-p_{k-1})=0
\end{equation}
where we use $p_k\in \mathcal M(A,\alpha'')$.
\end{proof}

\begin{lem}
The polarized lattice $(\mathcal M(A,\alpha''_0),Z)$ is polystable.
\end{lem}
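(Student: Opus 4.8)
The plan is to establish two facts about $L:=\mathcal M(A,\alpha''_0)$, from which polystability of phase $0$ is immediate: that $L$ is complemented, and that every nonzero element of $L$ has phase $0$ with respect to $Z$. Indeed $\phi(L)=\mathrm{Arg}(Z([0,1]))=\mathrm{Arg}(\tau(\omega-\theta))=0$, since $\tau(\theta)=0$ by semistability of $\mathcal M(A,\alpha'')$ of phase $0$ and $\tau(\omega)>0$; so once all phases are $0$ we know $L$ is semistable of phase $0$ and that $L^0=L$ is complemented, which is exactly the definition of polystable of phase $0$.

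For the complementedness, I would first compare the membership conditions for $\mathcal M(A,\alpha''_0)$ and $\mathcal M(A,\alpha'')$ on the intervals of the weight filtration. If $p_{k-1}\le q\le p_k$ then $q$ commutes with the weight grading $r$, so $q=p_{k-1}+q_k$ with $q_k\le p_{\lambda_k}$, and using the upper--triangularity of $\alpha''$ with respect to the $\RR$--grading (the preceding lemma) a direct computation gives $(1-q)\alpha''q=(1-q)\alpha''_0 q=(p_{\lambda_k}-q_k)(p_{\lambda_k}\alpha'' p_{\lambda_k})q_k$. Hence the interval $[p_{k-1},p_k]$ computed in $L$ coincides with the one computed in $\mathcal M(A,\alpha'')$, and it is the lattice of subobjects of the $k$--th graded representation $R_k$ with underlying data $(p_{\lambda_k}A^0p_{\lambda_k},\,p_{\lambda_k}\alpha'' p_{\lambda_k})$. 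Since the weight filtration is paracomplemented and $\lambda_k-\lambda_k=0<1$, each such interval is complemented, i.e. each $R_k$ is semisimple; and because $\alpha''_0$ has no components crossing weight blocks, $(A^0,\alpha''_0)$ is the direct sum $\bigoplus_k R_k$, hence semisimple, so its lattice of subobjects $L$ is complemented (as in the proof of Lemma~\ref{lem_lozenge_semisimple}).

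To see that all phases are $0$: by additivity of $Z$ over composition series, and the fact that in a complemented lattice every composition factor $[u_{l-1},u_l]$ is perspective to some $[0,w_l]$ with $w_l$ an atom, it suffices to show $Z([0,s])$ is a positive real for every atom $s$ of $L$. Such an $s$ embeds into one of the summands $R_k$, hence — $R_k$ being semisimple — is perspective in $L$ to an atom $s'\le p_{\lambda_k}$ of $R_k$, so $Z([0,s])=Z([0,s'])$. Writing $s'=s_0-p_{k-1}$ with $s_0\in[p_{k-1},p_k]\subset\mathcal M(A,\alpha'')$, picking a complement $t_0$ of $s_0$ in $[p_{k-1},p_k]$, and putting $t'=t_0-p_{k-1}$, one has $\overline{[0,s']}+\overline{[0,t']}=\overline{[0,p_{\lambda_k}]}$ in $K(L)$, so $Z([0,s'])+Z([0,t'])=Z([0,p_{\lambda_k}])$, and the right--hand side is a positive real since $p_{k-1},p_k\in\mathcal M(A,\alpha'')^0$ have phase $0$. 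Comparing imaginary parts gives $\tau(i\theta s')+\tau(i\theta t')=0$; but $\tau(i\theta s')=\tau(i\theta s_0)\le 0$ and $\tau(i\theta t')=\tau(i\theta t_0)\le 0$ by semistability of $\mathcal M(A,\alpha'')$ (using $\tau(i\theta p_{k-1})=0$). Hence both vanish, $Z([0,s'])=\tau(\omega s')$ is a positive real, and $s$ has phase $0$.

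The main obstacle is this last step: the semistability hypothesis only supplies the one--sided bound $\tau(i\theta p)\le 0$, and it must be upgraded to an equality on all of $L$. The mechanism is to confine an atom to a single graded piece of the weight filtration and pair it there with a complement — which is precisely where paracomplementedness (complementedness of the graded pieces) is used — and then exploit additivity of $Z$ in $K(L)$. A subsidiary technical point is the $K$--theoretic bookkeeping: identifying the graded interval inside $L$ with the one inside $\mathcal M(A,\alpha'')$, and reducing arbitrary elements and isomorphic simples to atoms via perspectivity, so that the phase computation localizes to a single weight block.
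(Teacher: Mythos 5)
Your argument aims to prove something strictly stronger than polystability --- that \emph{all} of $L=\mathcal M(A,\alpha''_0)$ is complemented and that \emph{every} nonzero element has phase $0$ --- and this stronger statement is false. Recall the paper's definition: polystable of phase $0$ means semistable plus $L^0$ complemented; elements of phase $<0$ are allowed and need not have complements. The error enters at the step ``each such interval is complemented, i.e.\ each $R_k$ is semisimple.'' Paracomplementedness of the weight filtration asserts that $[p_{k-1},p_k]$ is complemented as an interval in $\mathcal M(A,\alpha'')^0$, the sublattice of \emph{phase-$0$} elements; your interval identification (which is correct) concerns the full lattices $\mathcal M(A,\alpha''_0)$ and $\mathcal M(A,\alpha'')$. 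Complementedness of the phase-$0$ interval says the graded piece $R_k$ is \emph{polystable} (a direct sum of stables of phase $0$), not semisimple: a stable object that is not simple has subobjects of strictly negative phase, and those have no complements in the full subobject lattice. Concretely, take $A^0=\mathrm{End}(\CC)\oplus\mathrm{End}(\CC)$, $\alpha''=\mathrm{id}\in\mathrm{Hom}(\mathcal E_1,\mathcal E_2)$, and $\theta$ chosen so that this rank-$(1,1)$ representation is stable of phase $0$. Then $\mathcal M(A,\alpha'')^0=\{0,1\}$, the weight filtration is trivial, $r=0$, $\alpha''_0=\alpha''$, and $\mathcal M(A,\alpha''_0)$ is the three-element chain $0<(0,\CC)<1$: not complemented, with $\phi([0,(0,\CC)])<0$. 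This also breaks your second paragraph downstream: the complement $t_0$ of $s_0$ in $[p_{k-1},p_k]$ that you pair an atom with need not exist, since only phase-$0$ elements of that interval are guaranteed complements.

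The repair is to argue only about the phase-$0$ part, which is what the paper does: one shows $\mathcal M(A,\alpha''_0)^0$ is complemented by combining (i) complementedness of the intervals $[p_{k-1},p_k]$ in $\mathcal M(A,\alpha'')^0$, supplied by paracomplementedness, with (ii) the observation that each $p_k$ itself has the complement $1-p_k$ in $\mathcal M(A,\alpha''_0)^0$, because $[p_k,\alpha''_0]=0$ forces $p_k\alpha''_0(1-p_k)=0$ as well as $(1-p_k)\alpha''_0p_k=0$; a modular lattice carrying a chain with complemented intervals whose members are all complemented is itself complemented. Your observation that $\alpha''_0$ preserves the weight grading, and your careful matching of the intervals of $\mathcal M(A,\alpha''_0)$ with those of $\mathcal M(A,\alpha'')$, are exactly the right ingredients for step (ii); the missing move is to stay inside the phase-$0$ sublattice rather than trying to control the whole lattice.
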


\begin{proof}
We need to show that $\mathcal M(A,\alpha''_0)^0$ is complemented.
By definition of the weight filtration, each interval $[p_{k-1},p_k]$ in $\mathcal M(A,\alpha'')^0$ is complemented, so it suffices to show that each $p_k$ has a complement in $\mathcal M(A,\alpha''_0)^0$.
Since $p_k\in M(A,\alpha'')$ and by definition of $\alpha''_0$ we have $(1-p_k)\alpha_0''p_k=0$ but $[p_k,\alpha_0'']=0$ so $p_k\alpha_0''(1-p_k)=0$ and $1-p_k\in \mathcal M(A,\alpha''_0)^0$ is a complement of $p_k$.
\end{proof}

Thus, if $A$ is finite-dimensional then Theorem~\ref{thm_king} tells us that after applying a gauge transformation we can assume $\alpha_0:=\alpha_0''-(\alpha_0'')^*$ is harmonic in the sense that
\begin{equation}\label{alpha0_harm}
\theta+d\alpha_0+(\alpha_0)^2=0
\end{equation}
(since we assume $\alpha=0$ we have $\lambda=0$).
This means in particular if we change the differential to $b\mapsto db+[\alpha_0,b]$ and the curvature $\theta$ to $0$ then we still have a lozenge algebra.
Thus we may assume that $\alpha_0=0$, since simultaneously twisting the differential by $\alpha_0$ and removing this term from $\alpha$ does not change the flow \eqref{flow_abstract_gauge}.

The property of the weight filtration that $[a_{k-1},a_l]$ is complemented for $\lambda_l-\lambda_k<1$, $k\leq l$ implies that after applying a gauge transformation by a harmonic invertible element in $A^0$ taking a splitting to an orthogonal one we can ensure that 
\begin{equation}\label{alpha01_vanish}
\alpha''_\lambda=0\qquad \text{ for }\lambda\in(0,1).
\end{equation}
Moreover, we may assume that 
\begin{equation}\label{alpha1_harm}
\partial\alpha_\lambda''=0 \text{ for }\lambda\leq -1
\end{equation}
i.e. we have harmonic representatives of the extension classes.

We can use $r$ and $\alpha''$ to construct a new lozenge algebra, $A_\diamond$, which
consists of harmonic chains which have certain degree with respect to $r$, thought of as an $\RR$-grading.
Precisely, set
\begin{gather}
A_\diamond^0=\{a\in A^0\mid \Delta a=0,[r,a]=0\} \\
A_\diamond^{1,0}=\{a\in A^{1,0}\mid \Delta a=0,[r,a]=-a\} \\
A_\diamond^{0,1}=\{a\in A^{0,1}\mid \Delta a=0,[r,a]=a\} \\
A_\diamond^2=\{a\in A^2\mid \Delta a=0,[r,a]=0\} \\
d_\diamond=0,\qquad \theta_\diamond=-i\omega r,\qquad \tau_\diamond=\tau,\qquad \omega_\diamond=\omega.
\end{gather}
The product $A_\diamond^{1,0}\otimes A_\diamond^{0,1}\to A_\diamond^2$ is by definition the product from $A$ composed with the projection to the harmonic part, which will then be in $A_\diamond^2$ automatically.
It follows from \eqref{alpha0_harm} that $d^2=0$ in this algebra, and $\theta_\diamond$ is central by construction.
Also, $\alpha_{-1}''\in A_\diamond^{0,1}$ by \eqref{alpha1_harm}, and $\mathcal M(A_\diamond,\alpha_{-1}'')$ is semistable of phase $0$ by definition of the weight filtration/grading.

\subsubsection*{Changing time scale}

The standing assumptions are that $A$ is a lozenge algebra and $\alpha\in A^{0,1}$ such that $\alpha_0=0$, \eqref{alpha01_vanish}, and \eqref{alpha1_harm} hold.
Suppose $x(t)\in A_\diamond^0$ solves
\begin{equation}
\frac{1}{2}\left(\dot{x}x^{-1}+\left(\dot{x}x^{-1}\right)^*\right)=-i\Lambda P\left(\left(x\cdot \alpha_{-1}\right)^2\right)-r
\end{equation}
which is just the flow \eqref{flow_abstract_gauge} in $A_\diamond$ for $\alpha_{-1}:=\alpha_{-1}''-(\alpha_{-1}'')^*$.
We want to construct a solution which solves the above equation but without the $r$ term.
Set
\begin{equation}
y(t):=(2t)^{r/2}x\left(\frac{1}{2}\log(2t)\right)
\end{equation}
then
\begin{equation}
\dot{y}=\left(r(2t)^{r/2-1}x\left(\frac{1}{2}\log(2t)\right)+(2t)^{r/2}\dot{x}\left(\frac{1}{2}\log(2t)\right)(2t)^{-1}\right)
\end{equation}
and
\begin{equation}
\dot{y}y^{-1}=(2t)^{-1}\left(r+\dot{x}\left(\frac{1}{2}\log(2t)\right)\left(x\left(\frac{1}{2}\log(2t)\right)\right)^{-1}\right)
\end{equation}
where we use $[x,r]=0$.
The left hand side is thus
\begin{equation}
\frac{1}{2}\left(\dot{y}y^{-1}+\left(\dot{y}y^{-1}\right)^*\right)=(2t)^{-1}\left(r+\frac{1}{2}\left(\dot{x}x^{-1}+\left(\dot{x}x^{-1}\right)^*\right)\right).
\end{equation}
On the other hand
\begin{align}
y\cdot\alpha_{-1} &= y^{*-1}\alpha_{-1}'y^*+y\alpha_{-1}''y^{-1} \\
&= (2t)^{-r/2}\left(x^{*-1}\alpha_{-1}'x^*\right)(2t)^{r/2}+(2t)^{r/2}\left(x\alpha_{-1}''x^{-1}\right)(2t)^{-r/2} \\
&= (2t)^{-1/2}\left(x\cdot\alpha_{-1}\right)
\end{align}
since $\alpha_{-1}'$ has $r$-degree $1$ and $\alpha_{-1}''$ has $r$-degree $-1$ and thus
\begin{equation}
\Lambda P\left(\left(y\cdot \alpha_{-1}\right)^2\right)=(2t)^{-1}\Lambda P\left(\left(x\cdot \alpha_{-1}\right)^2\right).
\end{equation}
Combining this we see that indeed
\begin{equation}
\frac{1}{2}\left(\dot{y}y^{-1}+\left(\dot{y}y^{-1}\right)^*\right)=-i\Lambda P\left(\left(y\cdot \alpha_{-1}\right)^2\right).
\end{equation}
Below we will modify $y$ so that it satisfies this equation, up to terms in $L^1$, but without the orthogonal projection $P$.

\subsubsection*{Green's function correction}

Let $x,y$ be as before and consider
\begin{gather}
w:=-i\Lambda\left(\left(y\cdot\alpha_{-1}\right)^2\right) \\
z:=y\left(1+G\left(y^{-1}wy\right)\right)
\end{gather}
where $G$ is a Green's function as in \eqref{greens}.
The factor $\left(1+G(y^{-1}wy)\right)$ has no effect on the asymptotics, but is needed to get a solution up to terms on $L^1$.
Indeed, we will show that $z$ satisfies 
\begin{equation}\label{z_eqn}
\frac{1}{2}\left(\dot{z}z^{-1}+\left(\dot{z}z^{-1}\right)^*\right)=-i\Lambda \left(d\left(z\cdot \alpha_{-1}\right)+\left(z\cdot \alpha_{-1}\right)^2\right)+\text{ terms in }L^1
\end{equation}
and is thus an asymptotic solution.

We write $O(t^\beta\mathcal L)$ for terms which are $O(t^{\beta})$ up to logarithmic corrections, e.g. $O(t^{\beta}\log t)$, $O(t^\beta\log t\log\log t)$, and so on.
As $\alpha_{-1}''$ has $r$-degree $-1$ and $w$ has $r$-degree $0$ we have
\begin{equation}
y\alpha_{-1}y^{-1}=O(t^{-1/2}\mathcal L),\qquad w=O(t^{-1}\mathcal L),\qquad G(y^{-1}wy)=O(t^{-1}\mathcal L).
\end{equation}
Consequently,
\begin{equation}
\left(1+G(y^{-1}wy)\right)^{-1}=\left(1-G(y^{-1}wy)\right)+O(t^{-2}\mathcal L)
\end{equation}
and
\begin{equation}
\dot{z}=\dot{y}\left(1+G(y^{-1}wy)\right)+yO(t^{-2}\mathcal L)
\end{equation}
where the terms in $O(t^{-2}\mathcal L)$ are of $r$-degree $0$, hence
\begin{equation}
\dot{z}z^{-1}=\dot{y}y^{-1}+O(t^{-2}\mathcal L).
\end{equation}

Next, we look at the right hand side.
We have
\begin{align}
\overline{\partial}zz^{-1} &= y\overline{\partial}G(y^{-1}wy)\left(1-G(y^{-1}wy)\right)y^{-1}+O(t^{-2}\mathcal L) \\
&=\overline{\partial}G(w)+O(t^{-2}\mathcal L) 
\end{align}
and similarly
\begin{equation}
z^{*-1}\partial z^*=\partial G(w)+O(t^{-2}\mathcal L)
\end{equation}
thus using the K\"ahler identities \eqref{kaehler_ident1} and defining property of $G$ \eqref{greens} we get
\begin{align}
-i\Lambda d\left(z^{*-1}\partial z^*-\overline{\partial}zz^{-1}\right) &=-2i\Lambda\overline{\partial}\partial G(w)+O(t^{-2}\mathcal L) \\
&=- \Delta Gw+O(t^{-2}\mathcal L) \\
&=(P-1)w+O(t^{-2}\mathcal L)
\end{align}
Recall that $\alpha_0=0$, \eqref{alpha01_vanish}, thus
\begin{equation}
\alpha''=\alpha_{-1}''+\nu''
\end{equation}
where $\nu''$ collects components of $r$-degree $<-1-\epsilon$ for some $\epsilon>0$.
Hence,
\begin{align}
z\alpha''z^{-1} &= y\left(1+G\left(y^{-1}wy\right)\right)\alpha''\left(1-G\left(y^{-1}wy\right)\right)y^{-1}+O(t^{-2}\mathcal L) \\
&=y\alpha''y^{-1}+O(t^{-3/2}\mathcal L) \\
&=y\alpha_{-1}''y^{-1}+O(t^{-(1+\varepsilon)/2}\mathcal L)
\end{align}
thus, using $\partial\alpha''=\overline{\partial}\alpha'=0$,
\begin{equation}
d\left(z\alpha''z^{-1}\right)=O(t^{-3/2}\mathcal L),\qquad d\left(z^{*-1}\alpha'z^{*}\right)=O(t^{-3/2}\mathcal L) 
\end{equation}
and
\begin{equation}
\left(z^{*-1}\alpha'z^{*}+z\alpha''z^{-1}\right)^2=\left(y^{*-1}\alpha_{-1}'y^{*}+y\alpha_{-1}''y^{-1}\right)^2+O(t^{-(1+\varepsilon)/2}\mathcal L).
\end{equation}

Combing this we get
\begin{align}
-i\Lambda \left(d(z\cdot \alpha)+(z\cdot \alpha)^2\right) &=(P-1)w-i\Lambda\left((y\cdot\alpha_{-1})^2\right)+O(t^{-(1+\varepsilon)/2}\mathcal L) \\
&= -i\Lambda P\left((y\cdot\alpha_{-1})^2\right)+O(t^{-(1+\varepsilon)/2}\mathcal L)
\end{align}
which shows \eqref{z_eqn}.

\subsubsection*{Iterative procedure}

Suppose $A$ is a finite--dimensional lozenge algebra and $\alpha''\in A^{0,1}$ such that $\mathcal M(A,\alpha'')$ is semistable of phase $0$.
Let
\begin{equation}
0=p_0<p_1<\ldots<p_n=1
\end{equation}
be the iterated weight filtration of $\mathcal M(A,\alpha'')$ labeled by elements $\beta_1<\ldots<\beta_n$ in $\RR\log t\oplus\RR\log\log t\oplus\ldots$.
The procedure described in this subsection gives a finite sequence of lozenge algebras $A,A_\diamond,\ldots$ and gauge transformations $g(t)\in A^0$ for $t\gg 0$ which give a solution of the flow \eqref{flow_abstract_gauge} up to terms in $L^1$, thus an asymptotic solution by Proposition~\ref{prop_asym_criterion}. 
In this process some fixed gauge transformation was applied to $\alpha''$ to ensure harmonicity properties.
Moreover, 
\begin{equation}
\log g(t)=\frac{1}{2}\sum_{k=1}^n\beta_k(p_k-p_{k-1})+O(1)
\end{equation}
i.e. $\log g(t)$ is, up to bounded terms, diagonal with entries which are linear combinations of iterated logarithms.

\section{Flow on Hermitian vector bundles}
\label{sec_hym}

In this section we explore the consequences of our theory in the setting of holomorphic bundles over a compact K\"ahler manifold $X$.
While initially $X$ can be of arbitrary dimension, we later restrict to the case of Riemann surfaces.

\subsection{Slope-semistable coherent sheaves}

We let $X$ be a compact K\"ahler manifold and $\omega$ its positive $(1,1)$-form.
The \textit{degree} of a coherent sheaf $E$ on $X$ is by definition
\begin{equation}
\deg(E):=\int_Xc_1(E)\wedge \omega^{n-1}
\end{equation}
and depends evidently only on the K\"ahler class $[\omega]\in H^2(X;\RR)$.
Consider
\begin{equation}
Z(E):=\mathrm{rk}(E)+\deg(E)i\in\CC
\end{equation}
then $Z(E)=0$ if the support of $E$ has codimension at least $2$.
As in \cite{Meinhardt2014}, let $\mathrm{Coh}^2(X)\subset \mathrm{Coh}(X)$ denote the full subcategory of sheaves whose support has codimension $\geq 2$, and \begin{equation}
\mathrm{Coh}_{(1)}(X)=\mathrm{Coh}(X)/\mathrm{Coh}^2(X)
\end{equation}
the quotient abelian category (localization).
The category $\mathrm{Coh}_{(1)}(X)$ behaves much like the $\mathrm{Coh}(Y)$ for a curve $Y$ in that is has cohomological dimension $\leq 1$. 
We refer to Meinhardt--Partsch~\cite{Meinhardt2014} for details.

Fix $\phi\in (-\frac{\pi}{2},\frac{\pi}{2}]$ and let $\mathrm{Coh}_{(1)}^\phi(X)\subset \mathrm{Coh}_{(1)}(X)$ be the full subcategory of semistable objects, $E$ of phase $\mathrm{Arg}(Z(E))=\phi$.
This is an abelian artinian category, hence Theorem~\ref{balanced_chain_thm} provides any $E\in\mathrm{Coh}_{(1)}^\phi(X)$ with a canonical $\RR^\infty$-filtration with polystable subquotients.
Furthermore, for any $E\in\mathrm{Coh}_{(1)}(X)$ we get a canonical refinement of its Harder--Narasimhan filtration.

\begin{remark}
If $\dim_{\CC}X>1$ then slope stability is degenerate in the sense that it does not see anything in codimension at least $2$.
Instead, suppose we have a stability condition in the sense of Bridgeland~\cite{bridgeland07} on $D^b(X)$, then for each phase $\phi\in\RR$ there is an artinian abelian category of semistable objects of phase $\phi$. 
Thus we get canonical refinements of the Harder--Narasimhan filtrations for this stability condition without needing to localize by a subcategory.
It remains an open problem however to show existence of stability conditions on $D^b(X)$ for general $X$, and even more significantly to prove an analogue of the DUY theorem (Theorem~\ref{duy_thm}) for Bridgeland stability conditions.
\end{remark}

\subsubsection*{Hermitian--Einstein metrics}

Textbook references for this material are Kobayashi~\cite{kobayashi87} and Siu~\cite{siu_book}.
Suppose $E$ is a holomorphic vector bundle over $X$.
For any choice of Hermitian metric, $h$, on $E$ there exists a unique unitary connection, $D$, whose $(0,1)$-component is the natural operator $\overline{\partial}_E$ determined by the holomorphic structure on $E$. 
With respect to a local holomorphic frame of $E$ we have
\begin{equation}
D=d+h^{-1}\partial h.
\end{equation}
The curvature of $D$ is given by
\begin{equation}
F=\overline{\partial}(h^{-1}\partial h)=h^{-1}(\overline{\partial}\partial h-\overline{\partial}hh^{-1}\partial h).
\end{equation}
Let $\Lambda$ be the operator on forms which is adjoint to $L(\alpha)=\omega\wedge\alpha$.
The metric $h$ is \textit{Hermitian--Einstein} if 
\begin{equation}
\Lambda F=\lambda
\end{equation}
for some constant $\lambda$.
Since $\mathrm{tr}\left(\frac{i}{2\pi}F\right)$ represents the first Chern class we necessarily have
\begin{equation}\label{top_lambda}
\lambda=-\frac{2n\pi\deg(E)}{\mathrm{rk}(E)\int_X\omega^n}i
\end{equation}
where $n=\dim_{\CC}X$.

The following fundamental theorem is due to Narasimhan--Seshadri for $X$ a Riemann surface \cite{narasimhan_seshadri}, Donaldson for $X$ an algebraic surface \cite{donaldson85} and Uhlenbeck--Yau for general K\"ahler $X$ \cite{uhlenbeck_yau}.

\begin{thm}\label{duy_thm}
A holomorphic vector bundle admits a Hermitian--Einstein metric if and only if it is slope-polystable.
\end{thm}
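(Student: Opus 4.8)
This is the Donaldson--Uhlenbeck--Yau theorem, the holomorphic-bundle counterpart of King's criterion (Theorem~\ref{thm_king}), now with genuine PDE content; I would treat its two implications quite differently.

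\textbf{The direction ``Hermitian--Einstein $\Rightarrow$ polystable''} is a Chern--Weil argument. Suppose $h$ satisfies $\Lambda F_h=\lambda$ and let $S\subset E$ be a coherent subsheaf with $0<\mathrm{rk}(S)<\mathrm{rk}(E)$. Away from an analytic subset of codimension $\geq 2$ the sheaf $S$ is a holomorphic subbundle, with associated orthogonal projector $\pi=\pi^2=\pi^*$ and second fundamental form $(1-\pi)\overline\partial\pi$. The Gauss--Codazzi identity, integrated over $X$, expresses $\deg(S)$ as a positive constant times $\int_X\mathrm{tr}(i\,\pi\,\Lambda F_h)\,\omega^n$ minus a positive constant times $\|(1-\pi)\overline\partial\pi\|_{L^2}^2$; since $\Lambda F_h=\lambda$ is a scalar (with $\lambda$ as in \eqref{top_lambda}), the first term equals $\mathrm{rk}(S)\,\mu(E)$ after normalisation, so $\mu(S)\leq\mu(E)$ with equality iff $(1-\pi)\overline\partial\pi\equiv 0$, i.e. iff $S$ is a holomorphic direct summand on which $h$ restricts to a Hermitian--Einstein metric. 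Inducting on the rank shows $E\cong\bigoplus_i E_i$ with each $E_i$ stable of slope $\mu(E)$.

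\textbf{The direction ``polystable $\Rightarrow$ Hermitian--Einstein''} is the analytic heart. First reduce to the stable case: a polystable bundle is a direct sum of stable bundles of equal slope, and an orthogonal direct sum of Hermitian--Einstein metrics is again Hermitian--Einstein. For $E$ stable, the plan is to run the Donaldson heat flow $h^{-1}\dot h=-2i(\Lambda F_h-\lambda)$ --- the bundle incarnation of \eqref{flow_abstract_metric} --- from an arbitrary initial metric $h_0$. One establishes (i) short-time existence, the flow being a quasilinear parabolic equation for $h$; (ii) long-time existence (Simpson), using that $e:=|\Lambda F_h-\lambda|$ obeys $(\partial_t-\Delta)e\leq 0$ so that the maximum principle plus parabolic regularity prevent finite-time blow-up of the curvature; (iii) monotonicity of the Donaldson functional $M(h_0,h_t)$ along the flow (of which it is the gradient flow). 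The crux is that $M$ is bounded below, in fact coercive, when $E$ is stable; granting this, the convergence arguments of Uhlenbeck--Yau and Simpson upgrade it to $C^\infty$-convergence $h_t\to h_\infty$ with $\Lambda F_{h_\infty}=\lambda$.

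\textbf{Where stability enters, and the main obstacle.} If $M(h_0,h_t)$ were \emph{not} bounded below, a suitable renormalisation of $h_t$ would have $L^2_1$-norms diverging, and a weak-compactness argument produces a nonzero limiting projector $\pi_\infty\in L^2_1(\mathrm{End}(E))$ with $\pi_\infty^2=\pi_\infty=\pi_\infty^*$ and $(1-\pi_\infty)\overline\partial\pi_\infty=0$ --- a \emph{weakly holomorphic subbundle} --- whose Chern--Weil degree satisfies the destabilising inequality $\mu\geq\mu(E)$. The Uhlenbeck--Yau regularity theorem then identifies $\pi_\infty$ with a genuine coherent subsheaf of slope $\geq\mu(E)$, contradicting stability; hence $M$ is bounded below. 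The hardest ingredient is precisely this regularity statement --- that an $L^2_1$ weakly holomorphic projector is smooth off an analytic subset of codimension $\geq 2$ and therefore defines a subsheaf --- together with the long-time existence and $C^0$-estimates for the flow in arbitrary dimension. On a Riemann surface the codimension-$2$ locus is empty and these difficulties disappear, which is why the curve case (Narasimhan--Seshadri) is classical and why the same flow, specialised to $\dim_{\CC}X=1$, supports the finer asymptotics pursued in Theorem~\ref{thm_hym_asymp}.
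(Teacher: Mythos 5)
The paper does not prove this statement at all: Theorem~\ref{duy_thm} is quoted as a known foundational result, with attributions to Narasimhan--Seshadri for curves, Donaldson for algebraic surfaces, and Uhlenbeck--Yau for general compact K\"ahler manifolds, and the surrounding text only uses its consequences (existence of constant central curvature metrics on polystable bundles, which seeds the asymptotic solutions in Section~\ref{sec_hym}). So there is no in-paper proof to compare yours against. As an outline of the standard argument, what you wrote is accurate and well organized: the Chern--Weil/Gauss--Codazzi computation for ``Hermitian--Einstein $\Rightarrow$ polystable'' (with equality in $\mu(S)\leq\mu(E)$ forcing the second fundamental form to vanish and the bundle to split), the reduction of the converse to the stable case, the Donaldson heat flow with Simpson's long-time existence via the subsolution inequality for $|\Lambda F_h-\lambda|$, and the identification of the failure of a lower bound for the Donaldson functional with the existence of a destabilizing weakly holomorphic subbundle via the Uhlenbeck--Yau regularity theorem. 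You also correctly flag that the codimension-two singular locus is the source of the difficulty in higher dimension and is absent on a Riemann surface, which is exactly why the paper can work with the classical Narasimhan--Seshadri case in its main argument. Be aware, though, that your text is a roadmap rather than a proof: the three hardest ingredients --- long-time existence with $C^0$-control, coercivity of the Donaldson functional under stability, and the regularity of $L^2_1$ weakly holomorphic projectors --- are each invoked by name rather than established, and each is a substantial theorem in its own right. That is a perfectly reasonable level of detail for a result the paper itself treats as a black box, but it should be presented as a citation-backed sketch, not as a self-contained argument.
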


Donaldson's approach to proving existence of Hermitian--Einstein metrics is to start with an arbitrary metric and follow the nonlinear heat-type flow
\begin{equation}\label{donaldson_flow}
h^{-1}\partial_th=-2i(\Lambda F-\lambda)
\end{equation}
whose solution is unique for given initial condition and exists for all positive time.

\subsection{Monotonicity and related properties}

The flow \eqref{donaldson_flow} is homogeneous in the sense that if $h$ is a solution for $t\geq 0$ and $f:[0,+\infty)\to\RR$ a smooth function, then $g:=e^{f(t)}h$ gives the same compatible connections and thus satisfies
\begin{equation}
g^{-1}\partial_tg=-2i(\Lambda F-\lambda)+\frac{df}{dt}.
\end{equation}
The Hermitian metrics on a bundle $E$ are partially ordered by the pointwise comparison
\begin{equation}
g\leq h :\Leftrightarrow g(v,v)\leq h(v,v)\qquad\text{for all }v\in TM.
\end{equation}
The flow preserves this partial order, as in the case of quiver representations.

\begin{prop}
\label{prop_mono_hym}
Let $g_t,h_t$ be solutions of \eqref{donaldson_flow} for $t\geq 0$ with $g_0\leq h_0$, then $g_t\leq h_t$ for all $t\geq 0$.
\end{prop}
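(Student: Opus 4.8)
The plan is to prove this by the parabolic maximum principle, in the spirit of the Donaldson--Simpson uniqueness argument for \eqref{donaldson_flow} but extracting a one--sided comparison in place of a two--sided estimate. Fix a reference metric so that $g_t,h_t$ become positive self--adjoint sections of $\mathrm{End}(E)$, and let $\xi_t$ be the $g_t$--self--adjoint positive automorphism of $E$ defined by $h_t(u,v)=g_t(\xi_t u,v)$. The hypothesis $g_0\le h_0$ is equivalent to $\xi_0\ge\mathrm{id}_E$ (all eigenvalues $\ge 1$), and the conclusion $g_t\le h_t$ is equivalent to $\xi_t\ge\mathrm{id}_E$ for all $t$. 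So it suffices to show that the smallest eigenvalue of $\xi_t(x)$ never drops below $1$, equivalently that $u_k:=\mathrm{tr}(\xi_t^{-k})$ stays bounded by $\dim_{\CC}E$ as $t$ grows, for every $k$.

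The next step is the evolution equation for $\xi_t$. Using the standard identity relating the Chern connections and curvatures of $g$ and $h$ (cf.\ \cite{kobayashi87,siu_book}), namely $D'_h=D'_g+\xi^{-1}D'_g\xi$ and hence $F_h-F_g=\overline{\partial}_E(\xi^{-1}D'_g\xi)$, and subtracting the two flow equations $h^{-1}\partial_t h=-2i(\Lambda F_h-\lambda)$ and $g^{-1}\partial_t g=-2i(\Lambda F_g-\lambda)$ (after rewriting $h^{-1}\partial_t h$ in terms of $\xi$ and $g^{-1}\partial_t g$), one obtains an equation of the schematic form
\begin{equation}
\partial_t\xi+\Delta_g\xi=-2\,\Lambda\!\left(\overline{\partial}_E\xi\cdot\xi^{-1}\cdot D'_g\xi\right)+Q(\xi),
\end{equation}
where $\Delta_g$ is the nonnegative $\overline{\partial}$--Laplacian on $\mathrm{End}(E)$ for the Chern connection of $g$, and $Q$ collects the remaining zeroth--order terms. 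Contracting with $\mathrm{tr}(\xi^{-k}\,\cdot\,)$ for $k$ large (equivalently, testing against a bottom eigenvector of $\xi$), the displayed first--order term contributes with the favorable nonpositive sign, because $\overline{\partial}_E\xi$ and $D'_g\xi$ are $g$--adjoint to one another; the point is then that $Q$ is arranged so that its contribution cannot decrease the minimal eigenvalue of $\xi$ once that eigenvalue equals $1$ --- this is precisely the algebraic mechanism already present in the finite--dimensional monotonicity statement, Proposition~\ref{prop_mono_quiver}. One thus arrives at the scalar differential inequality $(\partial_t+\Delta)u_k\le 0$ on $X$, with $\Delta$ the (nonnegative) Laplace--Beltrami operator.

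Finally one invokes the maximum principle on the compact manifold $X\times[0,T]$ --- the flow exists for all $t\ge 0$ by Donaldson's theorem recalled after \eqref{donaldson_flow}. Since $u_k(\cdot,0)\le\dim_{\CC}E$ and $u_k$ is a subsolution of the heat equation, $\max_X u_k(\cdot,t)\le\dim_{\CC}E$ for all $t$; letting $k\to\infty$ gives $\sup_X(\text{largest eigenvalue of }\xi_t^{-1})\le 1$, that is $g_t\le h_t$, and a parabolic Hopf lemma disposes of the borderline case where equality is approached. As $\xi$ is $\mathrm{End}(E)$--valued, the cleanest rigorous implementation of this last step is either Hamilton's tensor maximum principle applied to the cone $\{\xi\ge\mathrm{id}_E\}$, or the traces--of--powers approximation just used, each of which reduces the matter to the scalar maximum principle on $X$.

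I expect the main obstacle to be the sign bookkeeping in the second step: checking that all first--order (connection--difference) contributions in the difference of the two flows really assemble into the single nonpositive term $-2\Lambda(\overline{\partial}_E\xi\cdot\xi^{-1}\cdot D'_g\xi)$, and that the residual zeroth--order terms $Q(\xi)$ satisfy the required one--sided estimate along a direction realizing the minimal eigenvalue $1$. This last fact is the exact analog of the purely algebraic computation behind Proposition~\ref{prop_mono_quiver}; the genuinely new feature here is the elliptic term $\Delta_g$, which is harmless for a comparison principle but forces the maximum principle in place of the ODE argument used in the quiver case. Making the bundle--valued version of the maximum principle precise is a secondary, routine point, handled as indicated above.
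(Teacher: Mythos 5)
Your strategy is sound and could be completed---it is essentially Donaldson's comparison/uniqueness argument for \eqref{donaldson_flow}---but it is genuinely different from, and considerably heavier than, what the paper does. The paper never forms the ratio endomorphism $\xi=g^{-1}h$ or its evolution equation. Instead it perturbs one solution to $k_t:=e^{-\varepsilon t}g_t$, runs an open--closed argument in $t$, and at a point $(x,v)$ of the unit sphere bundle where $h_t(v,v)=k_t(v,v)$ computes $\frac{d}{dt}\left(h_t(v,v)-k_t(v,v)\right)$ directly from the two flow equations: in a holomorphic frame with $\partial h=0$ at $x$, the second-order contribution is $-\Delta\left(h(v,v)-k(v,v)\right)\geq 0$ because the scalar function $h(v,v)-k(v,v)$ has an interior minimum there, the first-order contribution $-2i\left(\Lambda(\overline{\partial}k\,k^{-1}\partial k)\right)(v,v)$ is nonnegative since $-i\Lambda\overline{\varphi}\wedge\varphi\geq 0$ for a $(1,0)$-form $\varphi$, and the $\varepsilon$ makes the derivative strictly positive; compactness of the sphere bundle finishes. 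Testing against a single vector realizing the pointwise minimum is what makes all of your ``sign bookkeeping'' disappear: there are no cross terms from differentiating powers of $\xi$, no tensor maximum principle, and no Hopf lemma. For the record, the two points you flag as the main obstacles do close in your route: after contracting with $\mathrm{tr}(\xi^{-k-1}\,\cdot\,)$ and combining with the cross terms of $\Delta\,\mathrm{tr}(\xi^{-k})$, the first-order contributions assemble into a sum of terms of the form $i\Lambda\,\mathrm{tr}\bigl(\xi^{-p}\,\overline{\partial}\xi\wedge\xi^{-q}(\overline{\partial}\xi)^{*}\bigr)$ with $\xi^{-p},\xi^{-q}>0$, each of a definite favorable sign; and the zeroth-order remainder $Q(\xi)$ is just the commutator $2i[\Lambda F_g-\lambda,\xi]$, which vanishes identically under $\mathrm{tr}(\xi^{-k-1}\,\cdot\,)$, so no one-sided estimate on it is needed. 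But you would still have to write that computation out in full, whereas the paper's pointwise argument on the sphere bundle sidesteps it entirely and generalizes verbatim to the perturbed flow used in Proposition~\ref{prop_asym_criterion_hym}.
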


\begin{proof}
It suffices to show that for arbitrary $\varepsilon>0$ we have $k_t:=e^{-\varepsilon t}g_t\leq h_t$ for $t\in[0,\infty)$.
Furthermore, since the set of $t\in[0,\infty)$ where $k_t\leq h_t$ is closed and contains $0$, the claim follows from existence a $\delta>0$ such that $k_t\leq h_t$ for $t\in[0,\delta)$.

Let $S(TX)$ denote the unit sphere bundle inside the tangent bundle $TX$ with respect to the metric $h_0$.
We claim that each $v\in S(TX)$ has a neighborhood $U\subset S(TX)$ and $\delta>0$ such that $k_t(w,w)\leq h_t(w,w)$ for $w\in U$ and $t\in[0,\delta)$.
If $k_0(v,v)<h_0(v,v)$ this is clear, so suppose $k_0(v,v)=h_0(v,v)=1$.
Since $g,h$ are solutions of the flow we have
\begin{gather}
k^{-1}\partial_tk=-2i\left(\Lambda k^{-1}(\overline{\partial}\partial k-\overline{\partial}kk^{-1}\partial k)-\lambda\right)-\varepsilon \\
h^{-1}\partial_th=-2i\left(\Lambda h^{-1}(\overline{\partial}\partial h-\overline{\partial}hh^{-1}\partial h)-\lambda\right). 
\end{gather}
For suitable choice of local holomorphic frame of $E$ we may assume that $\partial h_0=0$ at the basepoint, $x\in X$, of $v$ (see \cite{kobayashi87}, Proposition 1.4.20).
Since $2i\Lambda\overline{\partial}\partial=\Delta$ is the geometer's Laplacian on functions we get
\begin{align}\label{mono_main_calc}
\left.\frac{d}{dt}\right\vert_{t=0}&\left(h_t(v,v)-k_t(v,v)\right)= \\
&=-\Delta\left(h_0(v,v)-k_0(v,v)\right)-2i\left(\Lambda(\overline{\partial}k_0k_0^{-1}\partial k_0)\right)(v,v)+\varepsilon \geq \varepsilon \nonumber
\end{align}
where we have used that $-i\Lambda\overline{\varphi}\wedge\varphi\geq 0$ for a $(1,0)$-form $\varphi$.
This shows existence of the desired neighborhood $U$ and $\delta>0$.
The proposition then follows from compactness of $S(TX)$.
\end{proof}

\begin{coro}
Let $g_t,h_t$ be solutions of \eqref{donaldson_flow} for $t\geq 0$. 
Then there is a constant $C\geq 1$ such that
\begin{equation}
\frac{1}{C}g_t\leq h_t\leq Cg_t
\end{equation}
for $t\geq 0$.
\end{coro}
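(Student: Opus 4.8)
The plan is to reduce the statement to the monotonicity property (Proposition~\ref{prop_mono_hym}) combined with the homogeneity of the flow \eqref{donaldson_flow} under scaling by a positive constant. The key point is that one only needs a comparison at the initial time, which is then propagated by monotonicity.

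First I would establish comparability at $t=0$: since $X$ is compact, any two smooth Hermitian metrics on $E$ are uniformly comparable, so there exists a constant $C\geq 1$ with $\tfrac{1}{C}g_0\leq h_0\leq C g_0$ (the ratio of the two positive quadratic forms is a continuous, hence bounded, positive function on the unit sphere bundle of $E$). Next I would record that if $h$ solves \eqref{donaldson_flow} then so does $c\,h$ for any constant $c>0$: multiplying a Hermitian metric by a positive constant leaves the associated unitary connection $D$, and hence its curvature $F$, unchanged, so $(c\,h)^{-1}\partial_t(c\,h)=h^{-1}\partial_t h=-2i(\Lambda F-\lambda)$. This is just the case of constant $f$ in the homogeneity remark preceding Proposition~\ref{prop_mono_hym}.

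Finally, $\tfrac{1}{C}g_t$ and $C g_t$ are again solutions of \eqref{donaldson_flow}, with $\tfrac{1}{C}g_0\leq h_0$ and $h_0\leq C g_0$. Applying Proposition~\ref{prop_mono_hym} to the pair $\bigl(\tfrac{1}{C}g_t,\,h_t\bigr)$ gives $\tfrac{1}{C}g_t\leq h_t$ for all $t\geq 0$, and applying it to the pair $\bigl(h_t,\,C g_t\bigr)$ gives $h_t\leq C g_t$ for all $t\geq 0$. Combining these yields the claimed inequality with the same $C$.

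I do not expect any genuine obstacle: the argument is a direct corollary of the two ingredients already in hand. The only point meriting a word of care is that a single constant $C$, extracted from compactness of $X$ at time $t=0$ alone, suffices to control the solutions for all $t\geq 0$ — which is precisely the content of the monotonicity statement, and is the reason this corollary is an immediate consequence rather than requiring any new estimate.
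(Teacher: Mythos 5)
Your proof is correct and is essentially the paper's argument: compactness of $X$ gives the comparison $\tfrac{1}{C}g_0\leq h_0\leq Cg_0$ at $t=0$, and Proposition~\ref{prop_mono_hym} propagates it forward, with the scaling invariance of \eqref{donaldson_flow} (the constant-$f$ case of the homogeneity remark) justifying that $\tfrac{1}{C}g_t$ and $Cg_t$ are again solutions. The paper states this in two lines; you have merely made the same reasoning explicit.
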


\begin{proof}
Since $X$ is compact we can find $C\geq 1$ such that $C^{-1}g_0\leq h_0\leq Cg_0$. Apply Proposition~\ref{prop_mono_hym}.
\end{proof}

We call $h$ an \textbf{asymptotic solution} of \eqref{donaldson_flow} if for some (hence any) exact solution $k$ there is a constant $C\geq 1$ such that
\begin{equation}
\frac{1}{C}k_t\leq h_t\leq Ck_t
\end{equation}
for all $t\gg 0$.
An easily verifiable sufficient criterion for recognizing asymptotic solutions will be given below.

One may rewrite the flow as taking place in the gauge group instead of the space as metrics.
In this case there is a fixed reference metric $H$ on $E$ so that if $g$ is a smooth section $GL(E)$ then $h(v,w):=H(gv,gw)$ defines another metric on $E$, and conversely $g$ is determined by $h$ up to unitary transformations.
In terms of $g$ instead of $h$, the equation~\ref{donaldson_flow} becomes
\begin{equation}\label{gauge_flow}
\frac{1}{2}\left(\partial_tgg^{-1}+\left(\partial_tgg^{-1}\right)^*\right)=-i\left(\Lambda F-\lambda\right)
\end{equation}
where $F$ is the curvature of the metric connection
\begin{equation}
g\circ\overline{\partial}_E\circ g^{-1}+g^{*-1}\circ\partial_E\circ g^{*}.
\end{equation}

\begin{prop}\label{prop_asym_criterion_hym}
Suppose $g$ satisfies \eqref{gauge_flow} up to an error term $s$, i.e.
\begin{equation}
\frac{1}{2}\left(\partial_tgg^{-1}+\left(\partial_tgg^{-1}\right)^*\right)=-i\left(\Lambda F-\lambda\right)+s
\end{equation}
where $s$ is a smooth $t$-dependent self-adjoint section of $\mathrm{End}(E)$, and furthermore
\begin{equation}
-f\leq s\leq f
\end{equation}
for some smooth $L^1$ function $f:[0,\infty)\to[0,\infty)$.
Then the corresponding $t$-dependent metric, $h:=H(g\_,g\_)$, is an asymptotic solution of \eqref{donaldson_flow}.
\end{prop}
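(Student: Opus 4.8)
The plan is to reduce the claim to the monotonicity property of Proposition~\ref{prop_mono_hym} together with the homogeneity of the flow under rescaling, exactly as in the finite-dimensional case (Proposition~\ref{prop_asym_criterion}). First I would translate the gauge-group equation back to the space of metrics: setting $h:=H(g\_,g\_)$, the hypothesis says that $h$ satisfies
\begin{equation}
h^{-1}\partial_t h=-2i(\Lambda F-\lambda)+2s'
\end{equation}
where $F$ is the curvature of the metric connection determined by $h$ and $s'$ is a self-adjoint section with $-f\le s'\le f$ for an $L^1$ function $f$ (the factor and conjugation bookkeeping being routine). The point is that the error enters as an additive self-adjoint term, which by homogeneity of the flow can be absorbed into a scalar-type perturbation only up to a comparison, so we must instead control it by sandwiching between rescaled exact solutions.

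Next I would introduce the antiderivative $F(t):=\int_t^\infty f(s)\,ds$, which is finite, nonnegative, decreasing, and tends to $0$ as $t\to\infty$. The key comparison is between $h$ and the exact solutions obtained by running the flow from suitably rescaled initial data. Concretely, fix a large $T$ and let $k$ be the exact solution of \eqref{donaldson_flow} with $k_T=e^{2F(T)}h_T$. I claim $h_t\le k_t$ for all $t\ge T$. To see this, note that $e^{-2F(t)}k_t$ satisfies the flow up to the additive scalar term $\tfrac{d}{dt}(-2F(t))=2f(t)\ge 2s'$ (using $f\ge s'$), so the difference argument in the proof of Proposition~\ref{prop_mono_hym} — the maximum-principle computation \eqref{mono_main_calc} applied pointwise on $S(TX)$, together with compactness of $X$ — shows that the ordering $e^{-2F(t)}k_t\ge h_t$ is preserved from $t=T$ onward (at $t=T$ we have $e^{-2F(T)}k_T=h_T$). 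Hence $h_t\le e^{2F(t)}k_t\le e^{2F(T)}k_t$ for $t\ge T$. Symmetrically, comparing with the exact solution starting at $e^{-2F(T)}h_T$ gives $h_t\ge e^{-2F(T)}k'_t$ for the corresponding $k'$. Since any two exact solutions are mutually bounded by the Corollary following Proposition~\ref{prop_mono_hym}, and $e^{\pm 2F(T)}$ is a fixed finite constant, we conclude that $h$ is squeezed between constant multiples of a fixed exact solution for all $t\gg 0$, i.e. $h$ is an asymptotic solution.

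The main obstacle I anticipate is making the perturbed monotonicity rigorous: Proposition~\ref{prop_mono_hym} is stated for honest solutions of \eqref{donaldson_flow}, and here one of the two "solutions" only satisfies the flow up to the term $2(f(t)-s'(t))\ge 0$. One must check that the proof of that proposition goes through verbatim when one side has an extra \emph{nonnegative} self-adjoint (indeed here scalar) additive term on the right-hand side — this is exactly the role of the slack variable $\varepsilon$ in \eqref{mono_main_calc}, which already anticipates such a perturbation, so the modification is the replacement of the constant $\varepsilon$ by the nonnegative function $2(f(t)-s'(t))$; the closedness/openness dichotomy and the compactness of $S(TX)$ are unaffected. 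A secondary point requiring care is that $s'$ is only assumed self-adjoint, not scalar, whereas the clean comparison wants a scalar; but since $f-s'\ge 0$ as self-adjoint sections and the perturbing term we actually \emph{add} to $e^{-2F}k$ is the scalar $2f(t)$, the inequality $2f(t)\,\mathrm{id}\ge 2s'(t)$ is precisely what feeds into the pointwise estimate, so no genuine difficulty arises. Everything else is the same compactness-plus-maximum-principle argument already used for Proposition~\ref{prop_mono_hym}.
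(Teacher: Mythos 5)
Your proposal is correct and follows essentially the same route as the paper: sandwich $h$ between exact solutions rescaled by $\exp\left(\pm\int f\right)$ (the paper uses $k^{\pm}_t=\exp\left(\pm\int_0^t f\right)k_t$ with $k_0=h_0$, you use the tail integral and a start time $T$, a purely cosmetic difference), and observe that the monotonicity argument of Proposition~\ref{prop_mono_hym} survives when one comparand satisfies the flow only up to a nonnegative self-adjoint term, which is exactly what $f-s\geq 0$ supplies in the pointwise calculation \eqref{mono_main_calc}. No gaps.
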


\begin{proof}
Let $k$ be an exact solution of \eqref{donaldson_flow}, then 
\begin{equation}
k^{\pm}_t:=\exp\left(\pm\int_0^tf\right)k_t
\end{equation}
satisfy \eqref{donaldson_flow} up to an error term $\pm f$, respectively.
Furthermore, if $C:=\exp\left(\int_0^\infty f\right)$ then $k^+\leq Ck$ and $C^{-1}k\leq k^-$.
Fix the initial condition for $k$ by $k_0=h_0=H(g_0\_,g_0\_)$.
We will show that
\begin{equation}
k^-_t\leq h_t\leq k^+_t
\end{equation}
for $t\geq 0$, which implies the proposition.

The proof is a slight modification of the one for monotonicity, Proposition~\ref{prop_mono_hym}.
The family of metrics $h$ satisfies \eqref{donaldson_flow} up to an error term $g^{-1}sg$.
In the main calculation \eqref{mono_main_calc} a new term 
\begin{equation}
f(0)k_0^+(v,v)-H(g_0v,s(0)g_0v)=H(g_0v,(f-s)(0)g_0v)\geq 0
\end{equation}
appears. Since it is non-negative, the rest of the argument still works.
\end{proof}

\subsection{Asymptotic solutions}

We restrict to the case $\dim_{\CC}(X)=1$.
Let $E$ be a holomorphic bundle over $X$ with Harder--Narasimhan filtration
\begin{equation}
0=E_0\subset E_1\subset \ldots\subset E_m=E
\end{equation}
which means that each $S_k:=E_k/E_{k-1}$ is semistable and the slopes $\mu(S_k)=\deg(S_k)/\mathrm{rk}(S_k)$ satisfy
\begin{equation}
\mu(S_1)>\mu(S_2)>\ldots>\mu(S_m).
\end{equation}

\subsubsection*{Case without refinement}

To illustrate the strategy in a simple special case where the theory of Section~\ref{sec_lozenge} is not needed, let us assume first that each $S_k$ is in fact polystable, i.e. a direct sum of stable bundles.
Then by the Narasimhan--Seshadri theorem ($n=1$ case of Theorem~\ref{duy_thm}) the associated graded bundle
\begin{equation}
S:=\bigoplus_{k=1}^m S_k
\end{equation}
admits a Hermitian metric so that the associate connection $D$ has constant central curvature $F_S$, or equivalently is a critical point of the Yang--Mills functional $\|F\|^2$.
We assume such a metric is chosen on $S$.
The holomorphic bundle $E$ is isomorphic to $S$ with modified complex structure $\overline{\partial}_S+\alpha$ for some $(0,1)$-form $\alpha$ with values in $\mathrm{End}(S)$ which is strictly block upper--triangular with respect to the direct sum decomposition $S:=\bigoplus S_k$.

An asymptotic solution of \ref{gauge_flow} is simply
\begin{equation}\label{HN_only_solution}
g_t=e^{C(\mu-\mu_S)t}
\end{equation}
where $C:=2\pi/\int_X\omega$, $\mu:=\mu(E)$, and $\mu_S$ acts by multiplication by $\mu_k$ on $S_k$.
We need to check that this satisfies \ref{gauge_flow} up to terms in $L^1$.

An element $g$ of the gauge group (smooth automorphism of $S$) acts on $\alpha$ and $\alpha^*$ by
\begin{equation}\label{gauge_action}
g\cdot\alpha:=g\alpha g^{-1}-\overline{\partial}_Sgg^{-1},\qquad g\cdot\alpha^*:=g^{*-1}\alpha^*g^*+g^{*-1}\partial_Sg^{*}
\end{equation}
and thus the connection $\overline{\partial}_S+g\cdot\alpha+\partial_S-g\cdot\alpha^*$ has curvature
\begin{equation}
F=F_S+\overline{\partial}_S(g\cdot\alpha)-\partial_S(g\cdot\alpha^*)-g\cdot\alpha\wedge g\cdot\alpha^*-g\cdot\alpha^*\wedge g\cdot\alpha.
\end{equation}
If $g$ as in \ref{HN_only_solution} then $\partial_S g=\overline{\partial}_S g=0$ and we claim that $g\alpha g^{-1}$ is exponentially decaying. 
Indeed, the $(j,k)$ block of $g\alpha g^{-1}$ is
\begin{equation}
e^{C(\mu-\mu_j)t}\alpha_{jk}e^{-C(\mu-\mu_k)t}=e^{C(\mu_k-\mu_j)t}\alpha_{jk}
\end{equation}
but $\alpha_{jk}=0$ for $j\geq k$ and $\mu_k-\mu_j<0$ for $j<k$ by assumption.
Thus
\begin{equation}
F=F_S+(\textit{exponentially decaying terms})
\end{equation}
and by assumption on the metric on $S$ we have $-i\Lambda F_S=-C\mu_S$.
Since $\partial_tgg^{-1}=C(\mu-\mu_S)$ and $i\lambda=C\mu$ (see \eqref{top_lambda}) we find that $g$ solves \eqref{gauge_flow} up to exponentially decaying terms.
By Proposition~\ref{prop_asym_criterion_hym} we see that $g$ is an asymptotic solution.
This proves Theorem~\ref{thm_hym_asymp} in this special case.

\subsubsection*{Case with refinement}

We start by labeling of the Harder--Narasimhan filtration by exponential growth rate
\begin{equation}
\beta_k:=2\pi\left(\int_X\omega\right)^{-1}(\mu(E)-\mu_k)
\end{equation}
instead of slope $\mu_k=\mu(E_k/E_{k-1})$.
For each semistable component $S_k=E_k/E_{k-1}$ there is an artinian lattice of subbundles of $S_k$ of the same slope $\mu_k$ which has an $\RR$-valued polarization given by rank.
This is up to a constant factor (which does not affect the weight filtration) the same as the polarization coming from the restriction of $Z(E)=\mathrm{rk}(E)+\deg(E)i$, since slope is fixed.
Thus we get a canonical iterated weight filtration 
\begin{equation}
0=S_{k,0}\subset S_{k,1}\subset\ldots\subset S_{k,r_k}=S_k
\end{equation}
of $S_k$ indexed by elements $\beta_{k,1}<\ldots<\beta_{k,r_k}$ in $\RR\log t\oplus\RR\log\log t\oplus\ldots$ and whose subquotients are direct sums of stable bundles of slope $\mu_k$.
These filtrations of $S_1,\ldots,S_m$ together provide a refinement of the Harder--Narasimhan filtration indexed by elements 
\begin{equation}
\beta_1t+\beta_{1,1}<\beta_1t+\beta_{1,2}<\ldots<\beta_1t+\beta_{1,r_1}<\beta_2t+\beta_{2,1}<\ldots<\beta_mt+\beta_{m,r_m}
\end{equation}
in $\RR t\oplus\RR\log t\oplus\RR\log\log t\oplus\ldots$.

For each semistable bundle, $S_k$, there is an associated graded holomorphic bundle, $T_k$, of the iterated weight filtration which is a direct sum of stable bundles of the same slope and thus admits a metric so that the associated connection has constant central curvature
\begin{equation}
F=-\frac{2\pi i\omega}{\int_X\omega}.
\end{equation}
Fixing such a metric, the algebra of endomorphism valued forms
\begin{equation}
\mathcal A^{\bullet}(X,\mathrm{End}(T_k))
\end{equation}
is a lozenge algebra with curvature $\theta=F$.
The bundles $S_k$ and $T_k$ have the same underlying complex vector bundle, but the holomorphic structure differ by
\begin{equation}
\alpha_k'':=\overline{\partial}_{T_k}-\overline{\partial}_{S_k}\in\mathcal A^{0,1}(X,\mathrm{End}(T_k))
\end{equation}
and $\alpha_k''$ is strictly upper--triangular with respect to the grading on $T_k$ coming from the iterated weight filtration on $S_k$.

Let $g_k(t)\in \mathcal A^0(X,GL(T_k))$ be a solution of the flow
\begin{equation}
\frac{1}{2}\left(\dot{g}_kg_k^{-1}+\left(\dot{g}_kg_k^{-1}\right)^*\right)=-i\Lambda\left(d\left(g_k\cdot\alpha_k\right)+\left(g_k\cdot\alpha_k\right)^2\right)
\end{equation}
where $\alpha_k=\alpha_k''-(\alpha_k'')^*$. 
(Note that $\Lambda\theta=\lambda$ cancel.)
The procedure described in Section~\ref{sec_lozenge} gives an asymptotic solution which shows that for the trajectory of metrics $h_k:=(g_k)^*g_k$ one has asymptotic growth
\begin{equation}
\left\|\log\left(h_k(t)\mid S_{k,j}\right)\right\|=\beta_{k,j}+O(1).
\end{equation}

Consider $T:=\bigoplus_k T_k$ which is the associated graded of the total filtration on $E$. 
In particular, $T$ and $E$ have the same underlying complex vector bundle and the holomorphic structures differ by
\begin{equation}
\alpha'':=\overline{\partial}_E-\overline{\partial}_T\in\mathcal A^{0,1}(X,\mathrm{End}(T))
\end{equation}
which is strictly upper triangular with respect to the grading on $T$ coming from the total filtration on $E$.
Moreover, the $T_k$ diagonal block of $\alpha$ is just $\alpha_k$.
As before one shows that
\begin{equation}
g(t):=\mathrm{diag}\left(e^{\beta_1t}g_1(t),\ldots,e^{\beta_mt}g_m(t)\right)\qquad \in\mathcal A^0(X,GL(T))
\end{equation}
solves
\begin{equation}
\frac{1}{2}\left(\dot{g}g^{-1}+\left(\dot{g}g^{-1}\right)^*\right)=-i\left(\Lambda\left(\theta+d\left(g\cdot\alpha\right)+\left(g\cdot\alpha\right)^2\right)-\lambda\right)
\end{equation}
where $\alpha=\alpha''-(\alpha'')^*$, up to terms in $L^1$.
If we set $h:=g^*g$ then
\begin{equation}
\left\|\log\left(h(t)\mid S_{k,j}\right)\right\|=2\beta_kt+\beta_{k,j}+O(1).
\end{equation}
where we consider $S_{k,j}$ also as holomorphic subbundle of $E$ by taking the preimage under $E_k\to S_k$. 
This completes the proof of Theorem~\ref{thm_hym_asymp}.

\section{Modified curve shortening flow on a cylinder}
\label{sec_curve_shortening}

On a Riemann surface with non-vanishing holomorphic 1-form $\Omega$ and symplectic form $\omega$ there is a natural curve shortening flow
\begin{equation}\label{arg_flow}
\dot{c}=-d\mathrm{Arg}\left(\Omega|_c\right)\llcorner\omega^{-1}
\end{equation}
which decreases length measured with respect to $|\Omega|^2$. 
By definition, the flow deforms the curve by (local) Hamiltonian isotopies, provided we restrict to those curves which admit global choice of $\mathrm{Arg}\left(\Omega|_c\right)$.

Motivated by the theory of stability in partially wrapped Fukaya categories of Riemann surfaces~\cite{hkk} we consider the following special case.
The flat surface is the cylinder
\begin{equation}
X=\CC/L\ZZ, \qquad \Omega=dz
\end{equation}
with circumference $L>0$.
The cylinder is punctured at points $x_0<x_1<\ldots<x_{n-1}\in [0,L)$.
The symplectic form is
\begin{equation}
\omega=\frac{dx\wedge dy}{\rho}
\end{equation}
where $\rho$ should be chosen so that near $x_i$ it is of the form
\begin{equation}
\rho(x,y)=(x-x_i)^2+y^2
\end{equation}
for simplicity, and $\rho$ vanishes at no other points.
This means that symplectically there is a half-infinite cylinder around each puncture $x_i$.

Assume the curve is a graph
\begin{equation}
x\mapsto (x,f(x)), \qquad f(x_i)\neq 0
\end{equation}
where we can consider $f$ as $L$-periodic function.
Also assume that not all $f(x_i)$ have the same sign, i.e. the curve passes both over and under some punctures (see Figure~\ref{plane_curve}).
Approximating $\mathrm{Arg}$ by the slope, the flow \eqref{arg_flow} becomes
\begin{equation}\label{flow_pde}
\partial_tf=\rho(x,f)\partial_{xx}f
\end{equation}
which is the PDE we want to study.

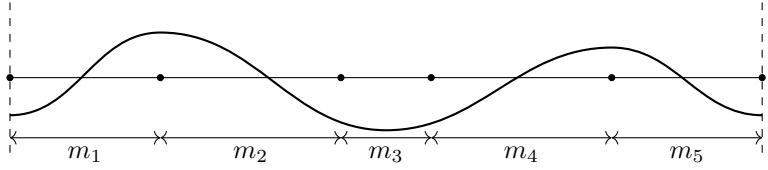
\begin{figure}
\center
\begin{tikzpicture}[scale=2]
\draw (0,0) to (5,0);
\draw[dashed] (0,-.5) to (0,.5);
\draw[dashed] (5,-.5) to (5,.5);
\foreach \x in {0, 1, 2.2, 2.8, 4,5}
 \draw[fill] (\x,0) circle [radius=0.02];
\draw [thick] (0,-.25) to [out=0,in=180] (1,.3) to [out=0,in=180] (2.5,-.35) to [out=0,in=180] (4,.2) to [out=0,in=180] (5,-.25);
\path[<->] (0,-.4) edge node[below]{$m_1$} (1,-.4);
\path[<->] (1,-.4) edge node[below]{$m_2$} (2.2,-.4);
\path[<->] (2.2,-.4) edge node[below]{$m_3$} (2.8,-.4);
\path[<->] (2.8,-.4) edge node[below]{$m_4$} (4,-.4);
\path[<->] (4,-.4) edge node[below]{$m_5$} (5,-.4);
\end{tikzpicture}
\caption{Curve on punctured flat cylinder.}
\label{plane_curve}
\end{figure}

We will give a heuristic argument to show that asymptotically, on a center manifold, this PDE reduces to a system of ODEs in variables $y_i=|f(x_i)|/\pi$, $i\in\ZZ/n$, of the form
\begin{equation}\label{system_y}
\frac{\dot{y}_i}{y_i}=\frac{\epsilon_{i-1}\epsilon_i}{m_i}y_{i-1}-\left(\frac{1}{m_i}+\frac{1}{m_{i+1}}\right)y_i+\frac{\epsilon_{i}\epsilon_{i+1}}{m_{i+1}}y_{i+1}
\end{equation}
where $\epsilon_i=\mathrm{sign}(f(x_i))$ and $m_i>0$ is the length of the segment from $x_{i-1}$ to $x_i$.
This is a special case of the system constructed from a directed acyclic graph in~\cite{hkkp_semistability}.
The calculations below provide good evidence for the following conjecture.
\begin{conj}
The PDE \eqref{flow_pde} has an $n$-dimensional center manifold on which the flow is approximated by the system \eqref{system_y} in the sense that error terms of solutions are bounded in coordinates $\log(y_i)$.
\end{conj}

The starting point is the following ansatz for $f$.
Assume that near $x_i$, $f$ is approximately of the form
\begin{equation}\label{ansatz_pt}
f(x)\sim a_{i,0}+a_{i,1}(x-x_i)+a_{i,2}\varphi\left(\frac{x-x_i}{a_{i,0}}\right)
\end{equation}
and on the segment between $x_i$ and $x_{i+1}$ we have
\begin{equation}\label{ansatz_seg}
f(x)\sim b_{i,0}(x-x_i)+b_{i,1}(x_{i+1}-x)+b_{i,2}\chi(x)+b_{i,3}\psi(x).
\end{equation}
The function $\varphi$ is the unique solution to
\begin{equation}
(x^2+1)\varphi''=1,\qquad\varphi(0)=\varphi'(0)=0 
\end{equation}
which is
\begin{equation}
\varphi(x)=x\arctan x-\frac{1}{2}\log(1+x^2)\sim \frac{\pi}{2}|x|-\log|x|
\end{equation}
where the approximation is good for $|x|\gg 0$.
The functions $\chi,\psi$ are chosen so that
\begin{equation}
\rho(x,0)\chi''=x-x_i,\qquad \rho(x,0)\psi''=x_{i+1}-x.
\end{equation}

Compatibility of \eqref{ansatz_pt} and \eqref{ansatz_seg} gives the following set of equations.
\begin{center}
\begin{tabular}{c|c|c}
term         & left of $x_i$        & right of $x_i$ \\ \hline  
$1$        & $m_ib_{i-1,0}=a_{i,0}$ & $m_{i+1}b_{i,1}=a_{i,0}$ \\
$x-x_i$    & $b_{i-1,0}-b_{i-1,1}=a_{i,1}-\frac{\pi a_{i,2}}{2|a_i,0|}$ & $b_{i,0}-b_{i,1}=a_{i,1}+\frac{\pi a_{i,2}}{2|a_i,0|}$ \\
$\log|x-x_i|$  & $m_ib_{i-1,2}=a_{i,2}$ & $m_{i+1}b_{i,3}=a_{i,2} $
\end{tabular}
\end{center}
Furthermore, plugging the ansatz into the equation \eqref{flow_pde} implies
\begin{equation}
\dot{a}_{i,0}=a_{i,2},\qquad \dot{b}_{i,0}=b_{i,2},\qquad \dot{b}_{i,1}=b_{i,3}.
\end{equation}
Combining the above gives
\begin{align}
\pi\frac{\dot{a}_{i,0}}{|a_{i,0}|}&=b_{i-1,1}-b_{i-1,0}-b_{i,1}+b_{i,0} \\
&=\frac{a_{i-1,0}}{m_i}-\frac{a_{i,0}}{m_i}-\frac{a_{i,0}}{m_{i+1}}+\frac{a_{i+1,0}}{m_{i+1}}
\end{align}
which is \eqref{system_y} with $y_i=\epsilon_ia_{i,0}/\pi$.

The signs $\epsilon_i$ determine an orientation of the graph which is a single cycle with $n$ edges.
Namely, vertices correspond to segments $[x_{i-1},x_i]$ and arrows correspond to punctures $x_i$ with orientation given by $\epsilon_i$, i.e. depending on whether the curve passes over or under the puncture.
By assumption not all arrows are oriented the same way.
More conceptually, the abelian category of representations of this directed graph appears as a full subcategory containing the object corresponding to the curve $(x,f(x))$ of the partially wrapped Fukaya category of the punctured cylinder.

The directed graph gives rise to a modular (in fact distributive) lattice $\mathcal M$ whose elements are subsets $S$ of the set of vertices so that no arrows lead out of $S$.
According to the theory developed in the previous paper~\cite{hkkp_semistability}, the asymptotics of solutions to \eqref{system_y} are determined by the iterated weight filtration on the lattice $\mathcal M$. 
Since the lattice comes from a directed graph, the simpler definition of a \textit{weight grading} may also be used.

The first case with wall--crossing is when $n=5$ with three arrows pointing one way and two the other, which corresponds to Figure~\ref{plane_curve}.
\begin{center}
\begin{tikzcd}
\underset{m_1}{\bullet} \arrow{r}\arrow[bend left]{rrrr} & \underset{m_2}{\bullet} & \underset{m_3}{\bullet} \arrow{l} & \underset{m_4}{\bullet} \arrow{l} \arrow{r} & \underset{m_5}{\bullet}
\end{tikzcd}
\end{center}
There are three chambers in the space $\RR_{>0}^5$ of parameters $m_i$.
The two disjoint walls (where $\log\log t$ appears in the asymptotics) are given by equations $D_1=0$, $D_2=0$ where
\begin{align}
D_1&=m_1m_4+m_3m_5+2m_4m_5-m_1m_2\\ 
D_2&=m_2m_5+m_1m_3+2m_1m_2-m_4m_5
\end{align}
The weight grading on the graph looks as follows in each of the chambers. Vertical position of the vertices indicates weight.
\begin{center}
\begin{tabular}{c|c|c}
$D_1\leq 0$ & 
$D_1,D_2\geq 0$ & $D_2\leq 0$ \\ \hline
\begin{tikzpicture}
\node (1) at (0,1) {$\bullet$};
\node (2) at (1.5,-1) {$\bullet$};
\node (3) at (1.5,0) {$\bullet$};
\node (4) at (1.5,1) {$\bullet$};
\node (5) at (0,0) {$\bullet$};
\draw[dashed] (1) edge (2);
\draw (2) edge (3);
\draw (3) edge (4);
\draw (4) edge (5);
\draw (5) edge (1);
\end{tikzpicture}
&
\begin{tikzpicture}
\node (1) at (0,.5) {$\bullet$};
\node (2) at (1.5,-1) {$\bullet$};
\node (3) at (1.5,0) {$\bullet$};
\node (4) at (1.5,1) {$\bullet$};
\node (5) at (0,-.5) {$\bullet$};
\draw[dashed] (1) edge (2);
\draw (2) edge (3);
\draw (3) edge (4);
\draw[dashed] (4) edge (5);
\draw (5) edge (1);
\end{tikzpicture}
&
\begin{tikzpicture}
\node (1) at (0,0) {$\bullet$};
\node (2) at (1.5,-1) {$\bullet$};
\node (3) at (1.5,0) {$\bullet$};
\node (4) at (1.5,1) {$\bullet$};
\node (5) at (0,-1) {$\bullet$};
\draw (1) edge (2);
\draw (2) edge (3);
\draw (3) edge (4);
\draw[dashed] (4) edge (5);
\draw (5) edge (1);
\end{tikzpicture}
\end{tabular}
\end{center}

One can make a variant of this example where the curve is not a closed loop but an embedded path with fixed endpoints at punctures. 
The quiver in this case is of type $A_n$ instead of extended $A_n$.

\bibliographystyle{plain}
\bibliography{balanced}

\Addresses

\end{document}